\documentclass{article}
\usepackage[T1]{fontenc}
\usepackage[utf8]{inputenc}
\usepackage{amsfonts}
\usepackage{amsmath}
\usepackage{mathtools}
\usepackage{amsthm}
\usepackage{dsfont}
\usepackage{enumerate}
\usepackage{color}
\usepackage[margin=1.35in]{geometry}
\usepackage{todonotes}
\usepackage[normalem]{ulem}
\usepackage{authblk}
\usepackage{mathscinet}

\usepackage[
backend=biber,
style=alphabetic,
]{biblatex}
\addbibresource{bibliography.bib}

\usepackage{hyperref}
\usepackage{xurl}
\hypersetup{breaklinks=true}

\usepackage{soul,xcolor}

\newcommand{\R}{\mathbb{R}}
\newcommand{\N}{\mathbb{N}}
\newcommand{\E}{\mathbb{E}}
\newcommand{\Prob}{\mathbb{P}}

\newcommand{\dd}{\,\mathrm{d}}
\DeclareMathOperator{\Span}{span}
\DeclareMathOperator{\Id}{Id}

\newtheorem{theorem}{Theorem}[section]
\newtheorem{definition}[theorem]{Definition}
\newtheorem{lemma}[theorem]{Lemma}
\newtheorem{assumption}[theorem]{Assumption}
\newtheorem{remark}[theorem]{Remark}
\newtheorem{proposition}[theorem]{Proposition}

   

\title{No blow-up by nonlinear It\^o noise for the Euler equations}
\author{Marco Bagnara\footnote{Classe di Scienze, Scuola Normale Superiore, Piazza dei Cavalieri 7, 56126 Pisa, Italy; \href{mailto:marco.bagnara@sns.it}{marco.bagnara@sns.it}}, Mario Maurelli\footnote{Dipartimento di Matematica, Universit\`a di Pisa, Largo Bruno Pontecorvo 5, 56127 Pisa, Italy; \href{mailto:mario.maurelli@unipi.it}{mario.maurelli@unipi.it}}, Fanhui Xu\footnote{Department of Mathematics, Union College, 807 Union St, Schenectady, NY 12308, USA; \href{mailto:fanhuixu@math.harvard.edu}{xuf2@union.edu}}}
\date{}

\begin{document}

\maketitle

\begin{abstract}
By employing a suitable multiplicative It\^o noise with radial structure and with more than linear growth, we show the existence of a unique, global-in-time, strong solution for the stochastic Euler equations in two and three dimensions. More generally, we consider a class of stochastic partial differential equations (SPDEs) with a superlinear growth drift and suitable nonlinear, multiplicative It\^o noise, with the stochastic Euler equations as a special case within this class. We prove that the addition of such a noise effectively prevents blow-ups in the solution of these SPDEs.
\end{abstract}

\section{Introduction}

In this paper, we consider the incompressible Euler equations in a $d$-dimensional bounded domain ($d=2,3$) perturbed by a nonlinear, multiplicative It\^o noise:
\begin{align}\label{EulerSto}
    du +\mathcal{P}[u\cdot \nabla u] dt = \sigma(u)dW_t,
\end{align}
where $u(t,x,\omega)\in \R^d$ is an unknown velocity field, $\mathcal{P}$ denotes the Leray projector, $W$ is a one-dimensional Brownian motion, and $\sigma$ is an appropriate coefficient exhibiting superlinear growth, for instance, $\sigma(u) = \Vert u\Vert_{W^{1,\infty}}^{1/2+\epsilon}\,u$. Our primary result establishes the global-in-time existence of smooth solutions. Notably, for $d=3$, this result is generally unknown without noise, and counterexamples exist in several function spaces (see examples below).
Thus, the incorporation of noise in our model averts potential blow-ups. Moreover, we provide a broad framework to ensure the absence of blow-ups in the solutions of hyperbolic-type SPDEs.

\textbf{Deterministic and stochastic Euler equations:} The deterministic Euler equations describe the motion of an incompressible, non-viscous fluid. Local well-posedness has been established in H\"older spaces $C^{m,\alpha}$, where $m\geq 1$ is an integer and $0<\alpha<1$ (see~\cite{Lichtenstein1925, gunther1927motion}), as well as in Sobolev spaces $W^{s,p}$, with $s > d/p + 1$, $1<p<\infty$, and the spatial dimension $d \ge 2$ (see~\cite{EbiMar1970,Kato1988CommutatorEA}). In two-dimensional cases, global well-posedness has been observed in H\"{o}lder spaces (\cite{Wolibner1933}) and Sobolev spaces $W^{1,p}$ (\cite[Theorem~6.1]{Yud1963}). However, in three dimensions, the global existence of smooth solutions continues to be an open problem. Remarkable recent advancements have revealed solutions exhibiting blow-up phenomena for closely related problems. In their groundbreaking work \cite{Bourgain2014StrongIO}, Bourgain and Li proved that the 3D Euler equations are ill-posed in $C^m(\mathbb R^d)$ and $C^{m-1,1}(\mathbb R^d)$ for all integers $m\geq 1$. They later expanded this conclusion in \cite{bourgain2015strong,Bourgain2021} to encompass the borderline spaces $W^{d/p+1,p}(\mathbb R^d)$ and Besov spaces $B^{d/p+1}_{p,q}(\mathbb R^d)$ for any $1\leq p<\infty$, $1<q\leq \infty$. Elgindi presented an example of explosive vorticity of the $C^{1,\alpha}(\mathbb R^d)$-solution for some $\alpha>0$ in \cite{Elg2021}. Chen and Hou proved in \cite{CheHou2022} blow-ups for smooth solutions of the 3D Euler equations in the axisymmetric case, with no-flow boundary conditions in the radial variable and periodic boundary conditions in the height variable; see also the work \cite{LuoHou2014} by Luo and Hou showing numerical evidence of blow-up in the axisymmetric case.

Although in our paper we focus on spatially smooth solutions, we briefly recall that irregular solutions can give rise to anomalous dissipation and non-uniqueness in both the Euler and Navier-Stokes equations. Notable works in this direction include (among many others) those by De Lellis and Sz\'{e}kelyhidi \cite{DeLSze2014} and Isett \cite{Ise2018} on the 3D Euler equations, Vishik's works on the 2D Euler equations \cite{Vis2018_1,Vis2018_2}, as well as Albritton, Bru\'e, and Colombo's work on the forced 3D Navier-Stokes equations \cite{AlbBruCol2022}.

Concerning the stochastic case, the study of the stochastic Euler equations began with the two-dimensional case, yielding global existence and global well-posedness results across various function spaces, see
\cite{doi:10.1080, BesFla1999, BrzPes2001, 10.2307/2667278, GlaVic2014, doi:10.1137,MikVal2000} 
and references therein. In three-dimensional cases, among other papers, Kim \cite{Kim2009} derived a local strong $H^{s}(\mathbb{R}^3)$-solution under additive noise conditions, provided $s>5/2$. Glatt-Holtz and Vicol \cite{GlaVic2014} showed local well-posedness results for the 3D stochastic Euler equations driven by a wide range of multiplicative noise. They proved that the solution is global with high probability when the multiplicative noise is linear. Euler equations with transport noise have also been studied, including, among others, Brze\'{z}niak, Flandoli, and Maurelli's work on the 2D case \cite{BrzFlaMau2016}, and Crisan, Flandoli, and Holm's work on the 3D case \cite{CriFlaHol2019}. We briefly mention that a lot of works have been devoted to stochastic Navier-Stokes equations (e.g. \cite{FlaGat1995,HaiMat2006,BiaFla2020,FerZan2018}).

\textbf{Regularization and prevention of blow-up by noise in finite dimensions:} The lack of a global well-posedness result in the deterministic 3D Euler equations has lead to the question of whether noise can restore well-posedness. This concept is known as regularization by noise. A well-understood example in this regard is the case of ODEs with non-smooth drifts, where the addition of an additive noise can bring well-posedness, see for example the works by Krylov and R\"ockner~\cite{krylov2005strong}, Fedrizzi and Flandoli~\cite{fedrizzi2011pathwise}, Catellier and Gubinelli \cite{catellier2016averaging}, among many others. This regularization result for ODEs can be translated into a regularization effect, by transport noise, for linear transport equations with non-smooth drift, see the seminal work \cite{flandoli2010well} by Flandoli, Gubinelli, and Priola. A motivation behind these works is that velocity fields arising from fluid models are often irregular, hence a linear transport equation with a non-smooth drift is an appropriate initial model for possible regularization phenomena in fluid dynamics.

Here, we delve into another regularization effect of noise, considering ODEs whose drifts have superlinear growth, and investigating whether suitable noise can prevent solutions from blowing up. For finite-dimensional SDEs, no-blow-up criteria and results, as well as stabilization by noise results, are known since at least the works of Khasminskii. For example, for linear systems, a linear multiplicative It\^o noise can provide a damping effect which turns unstable equilibria into stable ones, see e.g. the example at the end of \cite[Section 5.3]{khasminskii2011stochastic} and the stabilization criterion in \cite{Has1967}. A multiplicative It\^o noise with superlinear growth can enhance this damping effect, suppressing blow-up for ODEs, as long as the growth of the diffusion coefficient is strong enough with respect to the growth of the drift. This has been shown in several works, for example Appleby, Mao, and Rodkina \cite{4484185}, Gard \cite{gard1988introduction} (see also the Khasminskii's no blow-up criterion in \cite{Has1960}). For Stratonovich noise, a no blow-up result for ODEs also holds but with multidimensional noise and suitably chosen superlinear growth coefficient, see Maurelli \cite{maurelli2020non}. A classical method to show these no blow-up results is the Lyapunov function method (see again the book \cite{khasminskii2011stochastic}): one studies the evolution of a concave function $V$, such as $|x|^{1/2}$ or $\log|x|$, of the norm of the solution $X$ to the SDE and exploits the negative contribution in the second order term (hence coming from the noise) in the It\^o formula for $V(X)$. Let us also mention another class of no blow-up results, where the ODE may explode along ``rare'' directions, and the noise, which might be simply additive, prevents blow-ups by steering the solution away from these explosive directions. Examples include the works by Scheutzow \cite{doi101080Sch}, Herzog and Mattingly \cite{10.1214/EJP.v20-4047}, Athreya, Kolba, and Mattingly \cite{Athreya2012}. Finally we recall the celebrated stabilization result \cite{ArnCraWih1983} on stabilization of linear system by linear Stratonovich noise, which exploits Lyapunov exponents and tools from ergodic theory and whose stabilization mechanism is based on random rotations rather than damping.

\textbf{No blow-up by noise in infinite dimension:} Here we consider the problem of no blow-up by noise for Euler equations and more generally for hyperbolic-type SPDEs. Our work goes in the direction of extending the finite-dimensional results with multiplicative superlinear It\^o noise to the infinite-dimensional setting. We show that, with full probability, this kind of noise prevents solutions to certain SPDEs from blowing-up. Our approach is based on Lyapunov function, as from the finite-dimensional case, together with tools from the monotone settings for SPDEs (see e.g. \cite{LiuRoc2015}). Let us recall some related literature on the problem. Concerning linear noise, the damping effect of a large linear multiplicative It\^o noise has been used to show no blow-up with high probability for SPDEs, in particular for 3D Euler equations, by Glatt-Holtz and Vicol \cite{GlaVic2014}, and for nonlinear Schr\"odinger equation, by Barbu, R\"ockner and Zhang \cite{BarRocZha2017}. Most of the works on SPDEs with superlinear noise are more recent, we cite some of them without claim of completeness. Alonso-Or\'an, Miao, and Tang in \cite{ALONSOORAN2022244} used the Lyapunov function method to establish a no-blow-up result for a one-dimensional transport-type PDE with a nonlinear diffusion coefficient. While finalizing our paper, we became aware of the papers  \cite{RTW2020} and \cite{TW2022}. Ren, Tang, and Wang employed Lyapunov fucntions in \cite{RTW2020} for SPDEs characterized by a (possibly irregular and distribution-dependent) drift with superlinear growth and apply it to certain transport-type SPDEs. To our knowledge, the paper \cite{TW2022} by Tang and Wang gives the most general setting and the one most closely related to ours. The authors revisited the Lyapunov function criterion from \cite{RTW2020} and extended its application to a broad class of singular SPDEs, including the stochastic Euler and Navier-Stokes equations. They specifically noted that ``a fast enough growth of the noise coefficient will kill the growth of other terms so that the non-explosion is ensured''. With respect to the Euler equations, they derived a no-blow-up condition for the growth of the diffusion coefficient, which has similarities with our conclusions; see Remarks \ref{rmk:TW1} and \ref{rmk:TW2} for more details. Finally, we mention the work \cite{Ce05} by Cerrai, which shows stabilization by nonlinear multiplicative It\^o noise for another class of SPDEs, namely stochastic reaction-diffusion equations with multiplicative noise.

\textbf{Other approaches to no blow-up in infinite dimension:} Although here we focus on the superlinear noise case, there are other relevant approaches to regularization and no blow-up by noise, which we can roughly classify as follows: a) transport noise for non-smooth drifts, b) anomalous dissipation with transport-type, non-smooth-like noise, and c) noise multiplying the  ``driving term'' of the PDE under investigation. We mention a limited number of works for each class above, without claim of completeness. Concerning (a), Fedrizzi and Flandoli \cite{fedrizzi2013noise} proved that transport noise preserves the Sobolev regularity for the linear transport equations with a large class of non-smooth drifts. Concerning (b), Galeati \cite{Galeati2019OnTC} showed that, when taking a sequence of transport noises whose covariance matrices concentrate on the diagonal, anomalous dissipation emerges in the limit. Flandoli, Galeati, and Luo \cite{flandoli2021delayed, flandoli2021mixing} utilized this mechanism for several PDEs, including the 2D Euler equations, exploiting the extra dissipation. In particular, Flandoli and Luo \cite{flandoli2021high} demonstrated no blow-up with high probability for the 3D Navier-Stokes equations perturbed by a suitable transport noise.

Finally, concerning (c), we distinguish two cases. In one case, the noise multiplies the linear term of a dispersive PDE (modulated dispersive PDE): this approach has been used by Debussche and Tsutsumi in \cite{DebTsu2011}, Chouk and Gubinelli in \cite{ChoGub2015}, Chouk, Gubinelli, Li, Li and Oh in \cite{chouk2014nonlinear} to show well-posedness for nonlinear Schr\"odinger equation, KdV equation and other dispersive PDEs. In the other case, the noise multiplies the nonlinearity driving the PDE: this approach has been used by Gess and Souganidis in \cite{gess2014scalar} and Chouk and Gess in \cite{chouk2019path} for scalar conservation laws, as well as by Gassiat and Gess in \cite{gassiat2019} for Hamilton-Jacobi equations.

We also mention briefly another type of regularization, based on invariant measures, for (deterministic and stochastic) Euler equations: see e.g. \cite{AlbCru1990} and, more recently, \cite{FlaGroLuo2020}, \cite{Gro2020}. 
 
Finally, we highlight that there are situations where noise neither regularizes the system nor prevents blow-ups. For instance, in the finite-dimensional case, Scheutzow demonstrated a 2D example in \cite{doi101080Sch} where the solution becomes explosive when the dynamic system is perturbed by additive white noise, in which case the noise enhances the blow-up. In infinite-dimensional settings, de Bouard and Debussche showed in \cite{deBDeb2002} and in \cite{deBDeb2005} that the additive and multiplicative noise, white in time and correlated in space, can enhance blow-up, for example extending blow-up to every initial condition or accelerating blow-up. Hofmanov\'a, R. Zhu, and X. Zhu showed in \cite{hofmanova2019non}, using convex integration techniques, that stochastic 3D Navier-Stokes equations perturbed by additive, linear, or nonlinear noise do not have unique solution laws. Furthermore, Hofmanov\'a, Lange, and Pappalettera demonstrated in \cite{hofmanova2022global} that the 3D Euler equations, when perturbed by transport Stratonovich noise, have more than one probabilistically strong solution, making the equations ill-posed in H\"older spaces.

\textbf{Main result and idea of the proof:} Informally, our key results can be expressed in the following manner (see Theorems~\ref{thm:ApplicationToEuler}, \ref{thm:exist_global}, and~\ref{Unique} for precise statements). Concerning Euler equations, we show:

\begin{enumerate}[i)]
\item \textit{On a smooth, bounded two or three-dimensional domain $D$, the stochastic incompressible Euler equations~\eqref{EulerSto} with
\begin{align}\label{noise:superlinear}
\sigma(u) = c(1+\Vert u\Vert_{W^{1,\infty}(D)}^2)^{\beta/2} u
\end{align}
have a unique global-in-time strong solution in $H^s(D)$, provided that $s > d/2 + 2$, $u_0 \in H^{s+1}(D)$, and $\beta > 1/2, ~c > 0$ or $\beta = 1/2$ with a sufficiently large $c$.
}  
\end{enumerate}

The above result is a consequence of a no-blow-up result for general SPDEs, which we also show in this paper:
\begin{enumerate}[ii)]
\item \textit{Let $E_1\subset\subset E_0\hookrightarrow E_{-1}$ be dense embeddings of separable Hilbert spaces, with $E_1\subset\subset E_0$ being compact. Consider the SDE
\begin{align}\label{eq:SDE0}
dX_t =b(X_t)dt +\sigma(X_t)dW_t,
\end{align}
and assume that \emph{informally}, on a suitable dense set in $E_1$,
\begin{align*}
   &\langle b(x),x\rangle_{E_1} \le g(x)\|x\|_{E_1}^2,\\
   &\sigma(x)=f(x)x \text{ on }\{g\ge G\},\\
   &|f(x)|^2\ge 2g(x)-K \text{ on }\{g\ge G\},
\end{align*}
for suitable real-valued functions $f$ and $g$ and constants $G$ and $K$. Then, for every initial condition $X_0$ in $E_1$, the SDE \eqref{eq:SDE0} admits a weak, \emph{global in time} solution in $E_0$. Furthermore, if $b$ and $\sigma$ satisfy respectively a local monotonicity-type condition and a local Lipschitz-type condition, the solution also possesses pathwise uniqueness.}
\end{enumerate}

As mentioned above, our strategy to avoid blow-up for \eqref{eq:SDE0} is based on finding a suitable Lyapunov function and applying a compactness argument. Precisely, we take a Galerkin approximation $(X^n)_n$ of our SPDE and show morally that, with the choice of $\sigma$ as in the main result, $V(x)=\log \|x\|_{E_1}$ is a Lyapunov function for $X^n$, uniformly in $n$, thus getting a uniform, \emph{global in time} bound on the $E_1$ norm of $X^n$. The key computation to show the Lyapunov function property for $V(x)=\log \|x\|_{E_1}$ is morally the following one (we call $\mathcal{L}$ the generator of the SDE):
\begin{align*}
    \mathcal{L}\left(V\right)(x) = \frac {\langle b(x), x\rangle_{E_1}}{\lVert x \rVert_{E_1}^2} + \frac 12 \langle \sigma(x), D^2V(x) \sigma(x)\rangle
   \le g(x) - \frac12 |f(x)|^2 \le \frac{K}{2}.
\end{align*}
Then by a classical argument, using a stochastic Aubin-Lions lemma, we show that the family $(X^n)_n$ is tight in $C([0,T];E_0)$ and that any limit point converges weakly to a solution to the SPDE \eqref{eq:SDE0}. Along the way, we provide a general (though classical) lemma for the convergence of stochastic integrals, in the spirit of \cite{debussche2011local}, which might be of its own interest. Finally we show a strong uniqueness result and apply our general result to the case of Euler equations.

We remark that our focus here is on global well-posedness by superlinear noise, rather than on local well-posedness. Some assumptions could be weakened (see Remark \ref{rmk:Euler_optimality} for the Euler equations) by a finer analysis.

Motivated by the Wong-Zakai principle, one may ask if an analogous no blow-up result holds with Stratonovich integral in place of It\^o integral. The answer with a one-dimensional radial noise as here is negative: our noise fails to prevent blow-up when integrated in the Stratonovich sense, even in one dimension, see the SDE \eqref{eq:example_1D_Strat} and more in general Proposition \ref{prop:Strat_blowup}. However, several months after the conclusion of this paper, the first author demonstrated in \cite{Bagnara2023} that a no blow-up result, similar to the one presented here, can be obtained also for a Stratonovich perturbation with a higher-dimensional diffusion coefficient, inspired by the one introduced in \cite{maurelli2020non}; more precisely, in the Stratonovich case at least two independent noises are needed (see Remark \ref{rmk:Strat_positive_Bagnara}).

\textbf{Organization of the paper:} Our paper is organized as follows. In Section 2, we present our assumptions and main result in the general setting for equation \eqref{eq:SDE0}. Section 3 is dedicated to verifying that $\log\Vert x\Vert_{E_1}$ serves as a Lyapunov function for the approximate (Galerkin) SPDE models, which we use to establish the global existence and non-explosion of approximate solutions. We also demonstrate that the set of approximate solutions is precompact. In Section 4, we prove that any limit point of approximate solutions is a global weak solution to the original SPDE, and under certain Lipschitz-type conditions, we establish pathwise uniqueness for the weak solution. In Section 5, we apply the results of the general framework to the two and three-dimensional stochastic incompressible Euler equations, thereby concluding the global existence of a unique strong solution. Lastly, in Section 6, we consider briefly the case of Stratonovich noise in place of It\^o noise.

\section{Assumptions and Main Results}

In this section we describe the setting, the assumptions and our main result on an abstract PDE. We work mostly in the monotone and variational setting for SPDEs, see for example \cite{LiuRoc2015}, in particular concerning Gelfand triple and monotonicity-type assumptions; the main difference from \cite{LiuRoc2015} is that we do not have here a coercive linear operator.
Let $E_1$, $E_0$, and $E_{-1}$ be separable Hilbert spaces such that $E_1$ is compactly embedded in $E_0$ and $E_0$ is continuously embedded in $E_{-1}$, i.e.,
\begin{align*}
E_1\subset \subset E_0\hookrightarrow E_{-1},
\end{align*}
and let $(\Omega,\mathcal{A},(\mathcal{F}_t)_t,\mathbb P)$ be a filtered probability space satisfying the standard assumption. We address the stochastic differential equation
\begin{align}
dX_t = b(X_t)dt +\sigma(X_t)dW_t,\label{eq:SDE}
\end{align}
where $b,~\sigma$ are Borel functions from $E_0$ to $E_{-1}$ and $W$ is a one-dimensional Brownian motion on $(\Omega,\mathcal{A},(\mathcal{F}_t)_t,\mathbb P)$. For every $E_1$-valued random variable $X_0$ that is independent of $W$, we give the notions of weak solution and strong solution.

\begin{definition}\label{def:weak}
A weak solution to the equation \eqref{eq:SDE} is an object $(\Omega,\mathcal{A},(\mathcal{F}_t)_t,\Prob, (W_t)_t, (X_t)_t)$, such that $(\Omega,\mathcal{A},(\mathcal{F}_t)_t,\Prob)$ is a filtered probability space satisfying the standard assumption, $W$ is a one-dimensional Brownian motion on $(\Omega,\mathcal{A},(\mathcal{F}_t)_t,\Prob)$, and we have
\begin{itemize}
    \item $(X_t)_t$ is $(\mathcal{F}_t)_t$-progressively measurable (with values in $E_0$) and has continuous paths in $E_0$, $\Prob$-a.s.;
    \item $\Prob$-a.s., $t\mapsto b(X_t)$ is a locally integrable $E_{-1}$-valued function on $[0, \infty)$ and $t\mapsto \sigma(X_t)$ is a locally square-integrable $E_{-1}$-valued function on $[0, \infty)$;
    \item the following identity holds in $E_{-1}$  $\Prob$-a.s.:
\begin{align*}
X_t = X_0 + \int_0^t b(X_r) dr +\int_0^t \sigma(X_r) dW_r,\quad \forall t\ge0. 
\end{align*}
\end{itemize}
When $(\mathcal{F}_t)_t$ is the Brownian filtration (the smallest filtration satisfying the standard assumption and making $X_0$ $\mathcal{F}_0$-measurable and $W$ a Brownian motion), we say that the solution is strong.
\end{definition}

\begin{remark}
We assume $b,~\sigma$ to be $E_{-1}$ rather than $E_0$-valued to include fluid dynamic models whose drifts depend on the spatial gradient of $X_t$.
\end{remark}

In the remaining section, we summarize the assumptions needed for the main theorems. 
\begin{assumption}\label{Assump:Spaces}
 For separable Hilbert spaces $E_{-1},E_0,E_1$ introduced at the beginning, the compact embedding $E_1\subset \subset E_0$ and the continuous embedding $E_0\hookrightarrow E_{-1}$ are both dense. Moreover, there exist $\theta\in (0,1)$ and $M>0$ such that $\|v\|_{E_0}\le M\|v\|_{E_1}^{1-\theta}\|v\|_{E_{-1}}^\theta$ for all $v\in E_1$.
\end{assumption}

To employ an approximation argument, we exploit the Galerkin projections with respect to a vector basis specified by the following conditions.
\begin{assumption}\label{Assump:Projection}
There is a sequence of elements $\{e_n\}_{n\in\mathbb{N}^+}$ in $E_1$ whose span is dense in $E_{-1}$. Moreover, there is $C>0$ such that for every $x\in E_1$ and every $n\in\mathbb{N}^+$,
\begin{equation*}
\|\Pi_n x\|_{E_1}\le C\|x\|_{E_1},
\end{equation*}
where $\Pi_n$ is the orthogonal projector from $E_{-1}$ to $E^n$ and $E^n=\text{span}\{e_1,\ldots e_n\}$. 
\end{assumption}

\begin{remark}
Assumption \ref{Assump:Projection} is not strictly required. Indeed, using Assumption \ref{Assump:Spaces}, one can always proceed as in the proof of Lemma~\ref{Euler:basis} (see below) to construct a basis of $E_{-1}$ that is made of $E_{1}$ elements and is orthogonal in terms of both scalar products. (In this situation, we can choose $C=1$.) The reason why we impose Assumption \ref{Assump:Projection} is that the choice of $\{e_n\}_{n\in\mathbb{N}^+}$ also enters Assumptions \ref{Assump:drift} and \ref{Assump:sigma}.
\end{remark}

\begin{remark}
Combining Assumptions~\ref{Assump:Spaces} and \ref{Assump:Projection}, one can show that
\begin{equation*}
\|\Pi_n x-x\|_{E_{-1}}\rightarrow  0 \text{ as }n\to\infty,  \text{ for all }x\in E_{-1}.
\end{equation*}
\end{remark}
Now we introduce
\begin{equation} \label{eq:b_n&sigma_n}
b_n(x) = \Pi_n b(x),\quad \sigma_n(x) = \Pi_n \sigma(x),\quad x\in E^n.
\end{equation}

To state the assumptions on the drift~$b$, the initial condition~$X_0$, and the noise coefficient~$\sigma$, we denote the open ball centered at the origin with radius $R$ by $B_R$ and the complement by $B_R^c$. If there is a need to emphasize the underlying topology, we write $B_{R;\mathcal{H}}$ and $B^c_{R;\mathcal{H}}=\mathcal{H}\setminus B_{R;\mathcal{H}}$ (i.e., $B_{R;\mathcal{H}}$ is a subset of $\mathcal{H}$ and the radius of the ball is measured in the $\mathcal{H}$-norm, and $B^c_{R;\mathcal{H}}$ is the complement of $B_{R;\mathcal{H}}$ in $\mathcal{H}$). The closure of the ball is denoted by $\overline{B}_{R}$ or $\overline{B}_{R;\mathcal H}$.

We say that a function $f:\mathcal{H}\to \mathcal{K}$, with $\mathcal{H},~\mathcal{K}$ being separable Hilbert spaces, is bounded on balls if $\sup_{x\in \overline{B}_{R;\mathcal H}}\|f(x)\|_\mathcal{K}<\infty$ for every $R>0$. We use $a\vee b$ for $\max\{a,b\}$.

\begin{assumption}\label{Assump:drift} We assume that:
\begin{itemize}
\item[a)] $b:E_0\to E_{-1}$ is continuous and bounded on balls.
\item[b)] There exist $R\ge 1$ and a non-negative Borel function $g:E_1\to \mathbb{R}$, which is bounded on balls, such that 
\begin{equation*}
\langle b_n(x), x \rangle_{E_1} \le g(x)\lVert x\rVert_{E_1}^{2}\quad \text{ for every }x\in E^n\cap B_{R;E_1}^c \text{ and every } n\in \mathbb{N}.
\end{equation*}
\item[c)] The projected drift~$b_n:E^n\to E^n$ is locally Lipschitz for every $n\in \mathbb{N}$.
\item[d)] The initial condition $X_0$ belongs to $E_1$.
\item[e)] There exists an increasing function $k:\mathbb{R}\to \mathbb{R}_{+}$ such that
\begin{align*}
&\langle b(x)-b(y),x-y\rangle_{E_{-1}} \leq
k (\Vert x\Vert_{E_{0}}\vee\Vert y\Vert_{E_{0}} )\Vert x-y\Vert_{E_{-1}}^2\quad \text{for all } x,y\in E_{0}.
\end{align*}
\end{itemize}
\end{assumption}

Condition \eqref{Assump:drift}-(e) is known as local monotonicity condition and has been introduced in \cite{LiuRoc2010}.

\begin{assumption} \label{Assump:sigma} We assume that:
\begin{itemize}
\item[a)] $\sigma:E_0\to E_{-1}$ is continuous and bounded on balls.
\item[b)] There exist a Borel function $f:E_1\to \mathbb{R}$ which is bounded on balls and positive constants $G,~L$ such that
\begin{equation*}
\begin{cases}
    \sigma(x)=f(x)x\quad 
    &\text{for every }\,x\in B^c_{R;E_1} \cap \{g>G \},\\
    \lVert \sigma(x) \rVert_{E_1} \le L \lVert x \rVert_{E_1} \quad 
    &\text{for every }\,x\in B^c_{R;E_1} \cap \{g\le G \},
\end{cases}
\end{equation*}
where $g,R$ are the function and constant introduced in Assumption~\ref{Assump:drift}-(b).
\item[c)] There exists a non-negative constant $K$ such that
\begin{equation*}
|f(x)|^2 \geq 2g(x) -K \quad \text{for every }\,x\in B^c_{R;E_1} \cap \{g>G \}.
\end{equation*}
\item[d)] The projected noise coefficient~ $\sigma_n:E^n\to E^n$ is locally Lipschitz for every $n\in\mathbb{N}$.
\item[e)] There exists an increasing function $h:\mathbb{R}\to \mathbb{R}_+$ such that
\begin{align*}
&
\Vert \sigma(x)-\sigma(y)\Vert_{E_{-1}} \leq
h (\Vert x\Vert_{E_{0}}\vee\Vert y\Vert_{E_{0}} )\Vert x-y\Vert_{E_{-1}}\quad \text{for all }\,x,y\in E_{0}.
\end{align*}
\end{itemize}
\end{assumption}

 Below is our first main result of this paper, which follows immediately from Theorems~\ref{thm:exist_global} and~\ref{Unique} in Section~4.

\begin{theorem}\label{thm:exist_strong}
Under Assumptions \ref{Assump:Spaces}, \ref{Assump:Projection}, \ref{Assump:drift} and \ref{Assump:sigma}, there exists a unique global-in-time, strong solution to~\eqref{eq:SDE} with the initial condition $X_0$.
\end{theorem}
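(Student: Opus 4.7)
The plan is to follow the roadmap announced by the authors: build a Galerkin sequence, get a uniform global bound via a logarithmic Lyapunov function, pass to the limit for existence of a weak solution, prove pathwise uniqueness, and finally invoke Yamada--Watanabe to upgrade to a unique strong solution.

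First I would construct the Galerkin approximations. For each $n$, consider the finite-dimensional SDE on $E^n$
\begin{equation*}
dX^n_t = b_n(X^n_t)\,dt + \sigma_n(X^n_t)\,dW_t,\qquad X^n_0 = \Pi_n X_0.
\end{equation*}
By Assumptions~\ref{Assump:drift}(c) and~\ref{Assump:sigma}(d), $b_n$ and $\sigma_n$ are locally Lipschitz on $E^n$, so there exists a unique local strong solution up to an explosion time $\tau_n$. The non-explosion step is the heart of the argument: I would apply It\^o's formula to $V(x) := \tfrac{1}{2}\log(1+\|x\|_{E_1}^2)$ (the smooth surrogate for $\log\|x\|_{E_1}$) and show that, on the stopping region $\{\|X^n_t\|_{E_1} > R,\ g(X^n_t) > G\}$, the drift--diffusion balance promised by the authors gives
\begin{equation*}
\mathcal{L} V(X^n_t) \leq \frac{\langle b_n(X^n_t), X^n_t\rangle_{E_1}}{\|X^n_t\|_{E_1}^2} - \frac{1}{2}\,|f(X^n_t)|^2 + \text{(lower-order)} \leq \frac{K}{2},
\end{equation*}
using Assumption~\ref{Assump:drift}(b) to bound the drift by $g(x)$ and Assumption~\ref{Assump:sigma}(b)--(c) to extract the second-derivative correction $-\tfrac{1}{2}|f|^2$. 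On the complementary region $\{g\le G\}\cup B_R$, both $b_n$ and $\sigma_n$ (via the linear bound in Assumption~\ref{Assump:sigma}(b)) provide uniform control. A standard localization/martingale argument then gives $\sup_{t\le T}\mathbb{E}\log(1+\|X^n_t\|_{E_1}^2) \le C(T)$ uniformly in $n$, which rules out explosion and provides a uniform (in $n$) pathwise $E_1$-bound up to sets of small probability.

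Second, I would prove tightness of $(X^n)$ in $C([0,T];E_0)$. The uniform $E_1$-bound gives spatial tightness via the compact embedding $E_1\subset\subset E_0$. For time regularity, I would split $X^n_t - X^n_s$ into the drift integral (bounded in $E_{-1}$ uniformly thanks to Assumption~\ref{Assump:drift}(a) and the $E_1$ bound transferred through boundedness on balls) and the stochastic integral (bounded in $E_{-1}$ via Burkholder--Davis--Gundy and Assumption~\ref{Assump:sigma}(a)), then interpolate via the inequality $\|v\|_{E_0}\le M\|v\|_{E_1}^{1-\theta}\|v\|_{E_{-1}}^\theta$ of Assumption~\ref{Assump:Spaces}. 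This stochastic Aubin--Lions style argument yields tightness in $C([0,T];E_0)$, and by Prokhorov plus Skorohod I would extract a limit $X$ on a new probability space, together with a Brownian motion $\widetilde{W}$. Passing to the limit in the SDE, using the continuity of $b$ and $\sigma$ from $E_0$ to $E_{-1}$ (Assumptions~\ref{Assump:drift}(a), \ref{Assump:sigma}(a)) and a convergence-of-stochastic-integrals lemma of the type alluded to in the introduction, identifies $X$ as a global weak solution---this is the content of Theorem~\ref{thm:exist_global}.

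Third, pathwise uniqueness (Theorem~\ref{Unique}) would follow from the local monotonicity of $b$ (Assumption~\ref{Assump:drift}(e)) and local Lipschitzness of $\sigma$ (Assumption~\ref{Assump:sigma}(e)) in $E_{-1}$: for two solutions $X,Y$ with the same initial condition, apply It\^o's formula to $\|X_t-Y_t\|_{E_{-1}}^2$ up to a stopping time $\tau_N = \inf\{t: \|X_t\|_{E_0}\vee\|Y_t\|_{E_0}\ge N\}$, obtaining
\begin{equation*}
d\|X_t-Y_t\|_{E_{-1}}^2 \le \bigl(2k(N) + h(N)^2\bigr)\|X_t-Y_t\|_{E_{-1}}^2\,dt + dM_t,
\end{equation*}
and conclude by Gronwall. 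Since the weak solutions have continuous $E_0$-paths, $\tau_N\to\infty$ almost surely, giving $X=Y$ on $[0,\infty)$. Finally I would invoke the Yamada--Watanabe theorem: existence of a weak solution plus pathwise uniqueness gives existence of a unique strong solution to \eqref{eq:SDE} with initial condition $X_0$, which is exactly the statement of Theorem~\ref{thm:exist_strong}.

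The main obstacle I anticipate is the rigorous justification of the Lyapunov computation: one must work on the Galerkin level where $\sigma_n = \Pi_n\sigma$ rather than $\sigma$ itself, so the identity $\sigma(x)=f(x)x$ is distorted by the projection, and the second-order term in It\^o's formula involves inner products in $E_1$ rather than the ambient space. Handling this projection error, together with the bookkeeping between the two regimes $\{g>G\}$ and $\{g\le G\}$, is where most of the technical care in Section~3 is needed.
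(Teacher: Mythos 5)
Your proposal follows the paper's route essentially step for step: Galerkin approximation, a logarithmic Lyapunov function giving a uniform-in-$n$ bound in probability on $\sup_t\|X^n_t\|_{E_1}$, tightness in $C([0,T];E_0)$ via Aubin--Lions and a time-regularity estimate in $E_{-1}$, Skorokhod representation and a stochastic-integral convergence lemma to identify the limit as a weak solution (Theorem~\ref{thm:exist_global}), pathwise uniqueness by It\^o on $\|X^{(1)}_t-X^{(2)}_t\|_{E_{-1}}^2$ with localization and Gronwall (Theorem~\ref{Unique}), and Yamada--Watanabe to conclude. This is exactly the paper's architecture.

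One concrete step would fail as written, though. You take $V(x)=\tfrac12\log(1+\|x\|_{E_1}^2)$ as a ``smooth surrogate'' for $\log\|x\|_{E_1}$, but the cancellation $g-\tfrac12|f|^2\le K/2$ is only exact for the genuine logarithm: with $r=\|x\|_{E_1}^2$ the surrogate gives $\langle\nabla V,b_n\rangle\le g(x)\,\tfrac{r}{1+r}$ and $\tfrac12\langle\sigma_n,D^2V\sigma_n\rangle=-\tfrac12|f(x)|^2\,\tfrac{r(r-1)}{(1+r)^2}$, and after inserting Assumption~\ref{Assump:sigma}-(c) one is left with a residual term of order $\tfrac{2r\,g(x)}{(1+r)^2}$. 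Since Assumption~\ref{Assump:drift}-(b) only requires $g$ to be bounded on $E_1$-balls --- with no growth restriction --- this residual need not be bounded (nor dominated by $cV$), so the Khasminskii non-explosion criterion can fail in the generality of Theorem~\ref{thm:exist_strong}. The paper avoids this by using \emph{exactly} $\log\|x\|_{E_1}$ on $B^c_{2R;E_1}$, where $\nabla V=x/\|x\|_{E_1}^2$ and $D^2V=\|x\|_{E_1}^{-2}(\Id-2\,x\otimes x/\|x\|_{E_1}^2)$ produce the exact identities $\langle\nabla V,b_n\rangle\le g$ and $\tfrac12\langle\sigma_n,D^2V\sigma_n\rangle=-\tfrac12|f|^2$, and by modifying $V$ only on the ball $B_{2R;E_1}$, where everything is controlled by boundedness on balls. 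The fix is therefore trivial, but the surrogate as stated does not prove the theorem. Conversely, the difficulty you flag at the end about the projection distorting the radial structure is not actually there: for $x\in E^n$ one has $\sigma_n(x)=\Pi_n(f(x)x)=f(x)\Pi_n x=f(x)x$, so the identity $\sigma_n(x)=f(x)x$ survives the Galerkin projection exactly; the projector only matters in the intermediate regime $\{g\le G\}$, where Assumption~\ref{Assump:Projection} gives $\|\sigma_n(x)\|_{E_1}\le CL\|x\|_{E_1}$.
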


\begin{remark}
    Actually, as we will see in Theorem \ref{thm:exist_global}, the monotonicity-type Assumptions \ref{Assump:drift}-(e) and \ref{Assump:sigma}-(e) are only needed for uniqueness and not for global-in-time weak existence.
\end{remark}

\begin{remark}
    We emphasize that the main focus of the current paper is showing that a superlinear noise can provide global existence for some SPDEs and especially for 3D Euler equations. In particular, we do not focus our attention in optimizing certain assumptions, as for instance the regularity on the initial condition, see also Remark \ref{rmk:Euler_optimality}.
\end{remark}

\section{Precompactness of Approximate Solutions}

In this section, we construct global approximate solutions. The main trick in our argument is the use of $\log\|x\|_{E_1}$ as a Lyapunov function. We show that the approximate solutions are uniformly bounded in $E_1$ with a large probability. With this bound, we can then establish the tightness for the set of approximate solutions' probability distributions.

Consider the following finite-dimensional Galerkin approximations on $E^n$ to the SDE~\eqref{eq:SDE}, 
\begin{equation} \label{SDE_Galerkin}
    \begin{cases}
    dX^n_t=b_n(X^n_t)dt+\sigma_n(X^n_t)dW_t,\\
    X^n_0=\Pi_n X_0.
    \end{cases}
\end{equation} 
where $b_n,\sigma_n$ are defined in \eqref{eq:b_n&sigma_n} and $X_0\in E_1$. Since $b_n, \sigma_n$ are locally Lipschitz (by Assumptions \ref{Assump:drift}-(c) and \ref{Assump:sigma}-(d)), the local existence of a unique solution $X^n_t$ to~\eqref{SDE_Galerkin} is a classical result.

Let $R$ be the radius introduced in Assumption~\ref{Assump:drift}-(b), and let $a\in (0,\log(2R))$. We introduce a radially symmetric $C^2$ Lyapunov function $V:E_1 \to \R$ such that
\begin{align*}
\begin{cases}
    V(x)\ge a, &\forall x\in E_1, \\
    V(x)= a, &\forall x\in B_{R;E_1}, \\
    V(x)=\log\lVert x \rVert_{E_1}, \quad &\forall x\in B^c_{2R;E_1}.
\end{cases}
\end{align*}
We also ask $V$ to be non-decreasing along any ray from the origin.
Explicit computations show that for all $x\in B^c_{2R;E_1}$,
\begin{align*}
    \nabla V(x)&=\frac x{\lVert x \rVert_{E_1}^2},\\
    D^2V(x)&=\frac 1{\lVert x \rVert_{E_1}^2}\left(\Id -2\frac{x\otimes x}{\lVert x \rVert_{E_1}^2} \right).
\end{align*}

Now, we state a key lemma of this section. It proves the global-in-time existence of $X^n$ and provides a uniform Lyapunov-type bound for $X^n$ in probability, with respect to the $C([0,T]; E_1)$-topology. Here, $T$ is an arbitrarily prescribed positive number.

\begin{lemma} \label{Lemma:BoundednessInProb}
The unique local solution $X^n$ to \eqref{SDE_Galerkin} exists globally on $[0,T]$. Moreover, for every $\epsilon>0$, there exists $C_\epsilon>0$ such that
\begin{equation}\label{eq:sup_bounded_in_prob}
    \sup_n \Prob\left(\sup_{t\in[0,T]} \lVert X^n_t \rVert_{E_1} \ge C_\epsilon\right)\le \epsilon.
\end{equation}
\end{lemma}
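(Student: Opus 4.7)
The strategy is to show that $\mathcal{L}V(x) \le M$ for some constant $M$ independent of $n$ and $x \in E^n$, where $\mathcal{L}$ is the generator of the Galerkin SDE~\eqref{SDE_Galerkin}, and then to convert this Lyapunov estimate into global existence and the bound~\eqref{eq:sup_bounded_in_prob} via a standard stopping-time argument. By local Lipschitzness of $b_n,\sigma_n$, the process $X^n$ exists up to an explosion time $\tau_\infty^n = \lim_{N \to \infty} \tau_N^n$, where $\tau_N^n := \inf\{t \ge 0 : \|X^n_t\|_{E_1} \ge N\}$. Since $V(x) = \phi(\|x\|_{E_1})$ is radial, a direct computation shows that $\nabla V(x) \in E^n$ and $D^2 V(x)$ maps $E^n$ into itself for every $x \in E^n$, so the finite-dimensional It\^o formula applies and yields
\begin{align*}
V(X^n_{t \wedge \tau_N^n}) = V(X^n_0) + \int_0^{t\wedge \tau_N^n}\mathcal{L}V(X^n_s)\,ds + \int_0^{t\wedge \tau_N^n}\langle \nabla V(X^n_s), \sigma_n(X^n_s)\rangle_{E_1}\,dW_s,
\end{align*}
with the stochastic integral a true martingale on $[0,\tau_N^n]$ because its integrand is bounded on $\{x\in E^n : \|x\|_{E_1}\le N\}$.

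The uniform bound on $\mathcal{L}V$ is then obtained by case analysis. On $B_{R;E_1}$, $V$ is constant and $\mathcal{L}V \equiv 0$. The heart of the argument is the region $\|x\|_{E_1} \ge 2R$ with $g(x) > G$: Assumption~\ref{Assump:sigma}-(b) gives $\sigma(x) = f(x)x$, hence $\sigma_n(x) = f(x)x$ for $x \in E^n$ (as $\Pi_n x = x$), and the formulas for $\nabla V$, $D^2V$ from the excerpt yield
\begin{align*}
\langle \nabla V(x), \sigma_n(x)\rangle_{E_1} = f(x),\qquad \langle \sigma_n(x), D^2V(x)\sigma_n(x)\rangle_{E_1} = -f(x)^2;
\end{align*}
combined with Assumption~\ref{Assump:drift}-(b) and Assumption~\ref{Assump:sigma}-(c), this gives $\mathcal{L}V(x) \le g(x) - \tfrac12 f(x)^2 \le K/2$. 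For $\|x\|_{E_1} \ge 2R$ with $g(x) \le G$, the sublinear bound $\|\sigma(x)\|_{E_1} \le L\|x\|_{E_1}$, combined with $\|\Pi_n v\|_{E_1} \le C\|v\|_{E_1}$ (Assumption~\ref{Assump:Projection}) and the explicit control on $\|D^2V(x)\|_{\mathrm{op}}$, bounds the Hessian term by a constant, while the drift term is at most $G$. Finally, on the annulus $R \le \|x\|_{E_1} \le 2R$, smoothness of $V$'s interpolation keeps $\nabla V, D^2V$ bounded, $g$ is bounded on balls, and in both sub-cases ($g>G$ or $g\le G$) $\|\sigma_n(x)\|_{E_1}$ is bounded; altogether $\mathcal{L}V \le M$ uniformly in $n$.

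Taking expectations in the It\^o identity yields $\E[V(X^n_{t\wedge\tau_N^n})] \le V(X^n_0) + Mt$. Since $\|\Pi_n X_0\|_{E_1} \le C\|X_0\|_{E_1}$ and $V$ is non-decreasing along rays, $V_0 := \sup_n V(X^n_0)$ is finite. Using $V \ge a$ globally and $V(X^n_{\tau_N^n}) = \log N$ on $\{\tau_N^n \le T\}$ for $N \ge 2R$, one obtains
\begin{align*}
a + (\log N - a)\,\Prob(\tau_N^n \le T) \le V_0 + MT,
\end{align*}
uniformly in $n$. Letting $N\to\infty$ gives $\Prob(\tau_\infty^n \le T) = 0$, i.e.\ global existence on $[0,T]$; rearranging the same inequality yields $\Prob\bigl(\sup_{t\le T}\|X^n_t\|_{E_1} \ge N\bigr) \le (V_0 + MT - a)/(\log N - a)$, from which~\eqref{eq:sup_bounded_in_prob} follows by choosing $C_\epsilon := \exp\bigl(a + (V_0 + MT - a)/\epsilon\bigr)$.

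The main obstacle is the computation in the essential region $\|x\|_{E_1} \ge 2R$, $g(x) > G$: it is precisely there that the concavity of $\log r$ along the radial direction produces the negative contribution $-\tfrac12 f(x)^2$ which, via Assumption~\ref{Assump:sigma}-(c), absorbs the potentially unbounded drift contribution $g(x)$. Everything else is Lyapunov-function bookkeeping, provided one tracks the uniformity in $n$ via Assumption~\ref{Assump:Projection}.
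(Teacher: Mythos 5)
Your proposal is correct and follows essentially the same route as the paper: the modified logarithmic Lyapunov function, the three-region case analysis with the key cancellation $\mathcal{L}_nV \le g - \tfrac12 f^2 \le K/2$ on $\{g>G\}$ via Assumption~\ref{Assump:sigma}-(c), and a stopping-time/expectation argument to convert the uniform bound on $\mathcal{L}_nV$ into~\eqref{eq:sup_bounded_in_prob}. The only (harmless) difference is in the final bookkeeping: you exploit the constant bound $\mathcal{L}_nV\le M$ directly, giving $\E[V(X^n_{t\wedge\tau_N^n})]\le V_0+MT$ and a self-contained proof of non-explosion, whereas the paper passes through $\mathcal{L}_nV\le cV$, Gronwall, and a citation of Khasminskii's theorem.
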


\begin{proof}[Proof of Lemma~\ref{Lemma:BoundednessInProb}]
For ease of applying It\^o's lemma, we endow the $E_n$-space with the $E_1$-scalar product for every $n\in \mathbb{N}$. By the It\^o formula, $V(X^n)$ has the following stochastic differential up to the maximal existence time of the process $X^n$,
\begin{align}\label{Ito-LV}
    dV(X^n_t)= \mathcal{L}_n V(X^n_t) dt +\langle\nabla V(X^n_t), \sigma_n(X^n_t)\rangle dW_t.
\end{align}
Above, $\mathcal{L}_n$ denotes the generator of this SDE. To be specific,
\begin{align*}
    \mathcal{L}_n\left(V\right)(x)&=\langle \nabla V(x), b_n(x) \rangle + \frac 12 \langle \sigma_n(x), D^2V(x) \sigma_n(x)\rangle.
\end{align*}

We now show that $\mathcal{L}_n(V)$ is globally bounded in $E^n$, uniformly in $n\in\mathbb{N}$. First, note
that $V$ is constant in $B_{R;E_1}$ and so $\mathcal{L}_n(V)(x)= 0$ for all $x\in B_{R;E_1}$. Next, we consider $x$ in $E^n$ such that $R\le \lVert x \rVert _{E_1} \le 2R$. By the radial symmetry and the non-decreasing property of $V$, $\nabla V(x)=w(\|x\|_{E_1})x/\lVert x \rVert_{E_1}$ for some $C^1$ function $w:\R_+\to \R_+$. 
Therefore, for $x \in E^n$ with $R\le \lVert x \rVert _{E_1} \le 2R$, we have
\begin{align*}
    &\mathcal{L}_n\left(V\right)(x)=\frac{w(\|x\|_{E_1})} {\lVert x\rVert_{E_1}}\langle b_n(x),x \rangle + \frac 12 \langle \sigma_n(x), D^2V(x) \sigma_n(x)\rangle 
    \\& \quad
    \le w(\|x\|_{E_1})g(x)\lVert x\rVert_{E_1} + 
    \frac 12
    \begin{cases}
         \lVert D^2V(x) \rVert\, |f(x)|^2 \lVert x \rVert^2_{E_1} &\qquad \text{if }x\in \{g(x)>G \}\\
        \lVert D^2V(x) \rVert\, \lVert \sigma_n(x) \rVert^2_{E_1} &\qquad \text{if }x\in \{g(x)\le G \}
    \end{cases}
    \\ & \quad
    \le w(\|x\|_{E_1})g(x)\lVert x\rVert_{E_1} + \frac 12 \lVert D^2V(x) \rVert\, \lVert x \rVert^2_{E_1} \max\left( |f(x)|^2, L^2\right).
\end{align*}
Due to the continuity of $w,~D^2V$ and the boundedness of $f,~g$ on $E_1$-balls, the right side of the above inequality is uniformly bounded in $n$. 
Lastly, we consider $x\in B^c_{2R;E_1}\cap E^n$. In this case, we have
\begin{align*}
   \mathcal{L}_n\left(V\right)(x)& = \frac {\langle b_n(x), x\rangle_{E_1}}{\lVert x \rVert_{E_1}^2} + \frac 12 \langle \sigma_n(x), D^2V(x) \sigma_n(x)\rangle
   \le g(x) + \frac 12 \langle \sigma_n(x), D^2V(x) \sigma_n(x)\rangle.  
\end{align*}
If $g(x)>G$, then by Assumption~\ref{Assump:sigma}-(c) we get
\begin{align*}
   \mathcal{L}_n\left(V\right)(x) \le 
        g(x)-\frac12|f(x)|^2  
    \le  \frac K2.  
\end{align*}
If $g(x) \leq G$, then by Assumption~\ref{Assump:sigma}-(b) (and Assumption~\ref{Assump:Projection}) instead we get
\begin{align*}
   \mathcal{L}_n\left(V\right)(x) \le 
        G +\frac12 \lVert D^2V(x) \rVert\, \lVert \sigma_n(x) \rVert^2_{E_1} 
    \le  G+\frac12 C^2 L^2.  
\end{align*}
Combining all cases, we conclude that $\mathcal{L}_n(V)$ is bounded in $E_n$ uniformly in $n$. Since $0<a\leq V$, there must be a positive constant $c$ such that 
\begin{equation}
\mathcal{L}_n\left(V\right)\le c V.\label{eq:Lyap_bd}
\end{equation}
This bound implies the global existence of the approximate solutions $X^n$ for every $n\in \mathbb{N}$ (see~\cite[Theorem 3.5]{khasminskii2011stochastic}).

To prove \eqref{eq:sup_bounded_in_prob}, we employ a similar argument as in \cite[Theorem 3.5]{khasminskii2011stochastic}. Let $M\gg a$, and let $\tau^n_M$ be the stopping time when $V(X^n_t)$ first hits $M$.
We integrate both sides of \eqref{Ito-LV} over the interval $[0, t \land \tau_M]$, take the expected value, and apply \eqref{eq:Lyap_bd}. We obtain
\begin{align*}    &\E[V(X^n_{t\land\tau^n_M})] =\E[V(X^n_0)]
    +\E\int_0^{t\land \tau^n_M}\mathcal{L}_n(V)(X^n_s)\,ds
    \\ & \qquad
    \le \E[V(X^n_0)]+\E\int_0^{t\land \tau^n_M}cV(X^n_s)\,ds, \quad t\in(0,T).
\end{align*}
Applying Gronwall's lemma and Assumption \ref{Assump:Projection}, we arrive at
\begin{equation*}
    \E[V(X^n_{T\land\tau^n_M})]\le\E[V(X^n_0)]e^{cT}\le \E[V(C x_0)]e^{cT},
\end{equation*}
which by the Markov inequality implies
\begin{equation} 
    \Prob\left(\sup_{t\in[0,T]} V(X^n_t) \ge M\right)=\Prob\left(V(X^n_{T\wedge \tau^n_M})\ge M\right)
    \le\frac{\E[V(C x_0)] e^{cT}}M.\label{eq:Vp_Markov}
\end{equation}
Then, for any $\epsilon>0$, we obtain \eqref{eq:sup_bounded_in_prob} choosing a sufficiently large value for $M$.
\end{proof}

The next result provides a uniform bound in probability for $X^n$ in $C^{0,\alpha}([0,T];E_{-1})$ with $\alpha\in (0,1/2)$. Here, $C^{0,\alpha}([0,T];E_{-1})$ refers to the H\"{o}lder spaces defined by the norm
\begin{align*}
&\|f\|_{C^{0,\alpha}(E_{-1})} := \|f\|_{C(E_{-1})} + [f]_{C^{0,\alpha}(E_{-1})} := \sup_{t\in[0,T]}\|f(t)\|_{E_{-1}} 
+
\sup_{t,s\in[0,T]; t\neq s}\frac{\lVert f(t) -f(s)\rVert_{E_{-1}}}{|t-s|^{\alpha}}.
\end{align*}

\begin{lemma} \label{Lemma:HolderBoundednessInProb}
Let $\alpha\in (0,1/2)$. For every $\epsilon>0$, there exists a positive constant $C_\epsilon$ such that
\begin{equation}\label{eq:Holder_bounded_in_prob}
\sup_n\Prob\left( \lVert X^n \rVert_{C^{0,\alpha}(E_{-1})} \ge C_\epsilon\right)\le \epsilon. 
\end{equation}
\end{lemma}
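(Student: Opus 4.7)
The strategy is to combine the $E_1$-bound from Lemma \ref{Lemma:BoundednessInProb} with a localization argument and the Kolmogorov continuity criterion applied in $E_{-1}$. Fix $\epsilon>0$. By Lemma \ref{Lemma:BoundednessInProb} there is $R_\epsilon>0$ with $\Prob(\sup_{t\le T}\|X^n_t\|_{E_1}\ge R_\epsilon)\le \epsilon/2$ uniformly in $n$. Introduce the stopping time
\[
\tau_n := \inf\{t\ge 0 : \|X^n_t\|_{E_1}\ge R_\epsilon\}\wedge T,
\]
and work with the stopped process $Y^n_t := X^n_{t\wedge \tau_n}$. Since $E_1\subset\subset E_0$, on $[0,\tau_n]$ the process $X^n$ lives in a fixed $E_0$-ball, so by Assumptions \ref{Assump:drift}-(a) and \ref{Assump:sigma}-(a) there is a constant $M_\epsilon$ (independent of $n$) such that $\|b(X^n_r)\|_{E_{-1}}\vee \|\sigma(X^n_r)\|_{E_{-1}}\le M_\epsilon$ for $r\le \tau_n$. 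Because $\Pi_n$ is the orthogonal projection in $E_{-1}$ it is a contraction there, so the same bound holds for $b_n, \sigma_n$.

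For $0\le s\le t\le T$, write
\[
Y^n_t - Y^n_s = \int_s^t \mathbf{1}_{\{r\le \tau_n\}} b_n(X^n_r)\,dr + \int_s^t \mathbf{1}_{\{r\le \tau_n\}} \sigma_n(X^n_r)\,dW_r.
\]
The drift part is pathwise controlled by $M_\epsilon(t-s)$ in $E_{-1}$. For the stochastic integral, the Burkholder--Davis--Gundy inequality gives, for any integer $p\ge 1$,
\[
\E\Bigl\|\int_s^t \mathbf{1}_{\{r\le \tau_n\}} \sigma_n(X^n_r)\,dW_r\Bigr\|_{E_{-1}}^{2p}
\le C_p\, \E\Bigl(\int_s^t \mathbf{1}_{\{r\le \tau_n\}} \|\sigma_n(X^n_r)\|_{E_{-1}}^2\,dr\Bigr)^{p}
\le C_p M_\epsilon^{2p} (t-s)^p.
\]
Combining the two contributions yields $\E\|Y^n_t - Y^n_s\|_{E_{-1}}^{2p}\le C_{p,\epsilon}(t-s)^p$ for all $s,t\in[0,T]$ and all $n$.

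Given $\alpha\in (0,1/2)$, choose $p$ large enough that $\alpha < (p-1)/(2p)$. The Kolmogorov--Chentsov continuity criterion then gives a uniform moment bound
\[
\sup_n \E\bigl[\|Y^n\|_{C^{0,\alpha}([0,T];E_{-1})}^{2p}\bigr] \le K_{\epsilon,\alpha,p} < \infty.
\]
By Markov's inequality, there exists $C_\epsilon$ (depending also on $\alpha$) with $\sup_n \Prob(\|Y^n\|_{C^{0,\alpha}(E_{-1})}\ge C_\epsilon)\le \epsilon/2$. On the event $\{\tau_n = T\}$ we have $Y^n \equiv X^n$ on $[0,T]$, so
\[
\Prob\bigl(\|X^n\|_{C^{0,\alpha}(E_{-1})}\ge C_\epsilon\bigr)
\le \Prob(\tau_n < T) + \Prob\bigl(\|Y^n\|_{C^{0,\alpha}(E_{-1})}\ge C_\epsilon\bigr) \le \epsilon,
\]
which is the desired bound.

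The only delicate point is making sure the localization is compatible with the stochastic integral. This is standard, but it must be verified that $\tau_n$ is an $(\mathcal F_t)$-stopping time: this follows because $X^n$ has continuous paths in $E^n$ (finite dimensional) and $\|\cdot\|_{E_1}$ is continuous there. Everything else in the argument is a routine application of BDG and Kolmogorov's criterion once the localization is in place.
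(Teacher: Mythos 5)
Your proposal is correct and follows essentially the same route as the paper: localize via the $E_1$-bound of Lemma~\ref{Lemma:BoundednessInProb}, use boundedness on balls of $b,\sigma$ together with the contractivity of $\Pi_n$ in $E_{-1}$ and the Burkholder--Davis--Gundy inequality to get $\E\lVert X^n_{t\wedge\tau}-X^n_{s\wedge\tau}\rVert_{E_{-1}}^{2p}\le C|t-s|^{p}$ uniformly in $n$, then convert this into a uniform moment bound on the H\"older seminorm and finish with Markov's inequality. The only (cosmetic) difference is that you invoke the quantitative Kolmogorov--Chentsov criterion where the paper uses the fractional Sobolev embedding $W^{\beta,p}([0,T];E_{-1})\hookrightarrow C^{0,\alpha}([0,T];E_{-1})$; these are the same tool in different packaging.
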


    For the proof, we recall the Sobolev embedding $W^{\beta,p}\hookrightarrow C^{0,\alpha}$, where $1<p<\infty$ and $\alpha\le \beta-1/p$. More precisely, we may apply e.g.~\cite[Theorem B.1.5]{da1996ergodicity}, getting
\begin{equation*}
    \|f(t)-f(s)\|_{E_{-1}}^p \le C|t-s|^{\beta p -1} \int_0^T\int_0^T \frac{\|f_t-f_s\|_{E_{-1}}^p}{|t-s|^{1+\beta p}}\,ds\,dt
\end{equation*}
    and conclude that
\begin{equation}
    [f]_{C^{0,\alpha}(E_{-1})}^p \le C[f]_{W^{\beta,p}(E_{-1})}^p := C\int_0^T\int_0^T \frac{\|f_t-f_s\|_{E_{-1}}^p}{|t-s|^{1+\beta p}}\,ds\,dt \label{eq:Sobolev_emb_time}
\end{equation}
for a continuous function $f:[0,T]\to E_{-1}$, $1<p<\infty$, and $\alpha\le\beta-1/p$.

\begin{proof}[Proof of Lemma~\ref{Lemma:HolderBoundednessInProb}]
Take $\epsilon>0$. By \eqref{eq:sup_bounded_in_prob} and the continuous embedding from $E_1$ to $E_{-1}$, there exists $c_{\epsilon}>0$ such that
\begin{equation*}
\sup_n \Prob\left( \lVert X^n \rVert_{C(E_{-1})} \ge c_\epsilon\right)\le \epsilon.
\end{equation*}
Then, it suffices to establish a uniform bound in probability for the semi-norm $[X^n]_{C^{0,\alpha}(E_{-1})}$.

For $M>0$, we introduce the stopping time
\begin{equation*}
    \hat{\tau}^n_M = \inf\big\{t\ge 0\mid \|X^n_t\|_{E_1}\ge M\big\},
\end{equation*}
 which is the first exit time of $X^n$ from the open ball $B_{M;E_1}$, and the event
\begin{equation*}
    A^n_M:=\left\{\sup_{t\in[0,T]}\left\Vert X^n_t\right\Vert_{E_1} \ge M\right\}.
\end{equation*}
Clearly, $A^n_M\supseteq \{\hat{\tau}^n_M< T\}$. Moreover, $\sup_n \Prob (A^n_M) < \epsilon$ provided that $M$ is a sufficiently large number (see~\eqref{eq:sup_bounded_in_prob}).
For such an $M$, we have
\begin{align*}
&\Prob\left( [X^n]_{C^{0,\alpha}(E_{-1})} > a\right)
\le \Prob\left(~(A_M^n)^c,~[X^n]_{C^{0,\alpha}(E_{-1})}> a\right)+\Prob\left( A^n_M\right)\\
& \qquad\le \Prob\left(\left[X^n_{\cdot\wedge \hat{\tau}^n_M}\right]_{C^{0,\alpha}(E_{-1})}> a \right)+\epsilon
\le \frac{1}{a^p}\E\left[\left[X^n_{\cdot\wedge \hat{\tau}^n_M}\right]_{C^{0,\alpha}(E_{-1})}^p \right]+\epsilon,
\end{align*}
 whenever $a>0$ and $p>0$. It remains to show that
\begin{equation}
\sup_n\E\left[\left[X^n_{\cdot\wedge \hat{\tau}^n_M}\right]_{C^{0,\alpha}(E_{-1})}^p \right]<\infty,\label{eq:C_alpha_exp}
\end{equation}
as \eqref{eq:C_alpha_exp} implies that
$\sup_n\Prob\left( [X^n]_{C^{0,\alpha}(E_{-1})} > a_\epsilon\right)<2\epsilon
$ for some $a_\epsilon>0$, concluding the proof.

For this purpose, we use the equations~\eqref{SDE_Galerkin} and obtain
\begin{equation*}
    \left\Vert X^n_{t\wedge \hat\tau^n_M} -X^n_{s\wedge \hat\tau^n_M}\right\Vert^p_{E_{-1}}\le C_p \left\Vert\int_s^t b_n(X^n_{r\wedge \hat\tau^n_M})\,dr \right\Vert^p_{E_{-1}} + C_p \left\lVert\int_s^t\sigma_n(X^n_{r\wedge \hat\tau^n_M})\,dW_r \right\rVert^p_{E_{-1}},
\end{equation*}
where $s,t \in [0,T]$. Recall that $b_n=\Pi_n b$. Hence, the drift term can be bounded as follows,
\begin{align*}
& \left\Vert\int_s^t b_n(X^n_{r\wedge \hat\tau^n_M})\,dr \right\Vert^p_{E_{-1}}
    \le |t-s|^{p} \sup_{r\in[0,T]} \left\Vert b_n(X^n_{r\wedge \hat\tau^n_M}) \right\Vert^p_{E_{-1}}
    \le |t-s|^{p} \sup_{r\in[0,T]} \left\Vert b(X^n_{r\wedge \hat\tau^n_M}) \right\Vert^p_{E_{-1}}.
\end{align*}
If $p>1$, the stochastic integral can be addressed using the Burkholder-Davis-Gundy inequality,
\begin{align*} 
&\E\left[\left\lVert\int_s^t\sigma_n(X^n_{r\wedge \hat\tau^n_M})\,dW_r \right\rVert^p_{E_{-1}} \right]
    \le C_p\, \E\left[\left( \int_s^t  \left\lVert \sigma_n(X^n_{r\wedge \hat\tau^n_M})\right\rVert^2_{E_{-1}}\, dr \right)^\frac p2\right]
    \\
    &\quad
    \le C_p\, |t-s|^{\frac p2}\E\left[ \sup_{r\in[0,T]} \left\lVert \sigma_n(X^n_{r\wedge \hat\tau^n_M})\right\rVert^p_{E_{-1}} \right]
    \le C_p\, |t-s|^{\frac p2}\E\left[ \sup_{r\in[0,T]} \left\lVert \sigma(X^n_{r\wedge \hat\tau^n_M})\right\rVert^p_{E_{-1}} \right].
\end{align*}
Due to the continuous embedding of $E_1$ into $E_0$ and the boundedness on balls of $b$ and $\sigma$ from $E_0$ into $E_{-1}$ (by Assumptions \ref{Assump:drift}-(a) and \ref{Assump:sigma}-(a)), there exist positive constants $M_b,M_\sigma$ depending on $M$ such that 
\begin{align*}
    \sup_{r\in[0,T]} \left\lVert b(X^n_{r\wedge \hat\tau^n_M})\right\rVert^p_{E_{-1}}
    \le  M_b,
    \qquad
    \sup_{r\in[0,T]} \left\lVert \sigma(X^n_{r\wedge \hat\tau^n_M})\right\rVert^p_{E_{-1}}
    \le  M_\sigma
\end{align*}
for every $n\in\mathbb{N}$. Therefore, 
\begin{equation*}
\E\left[\left\Vert X^n_{t\wedge \hat\tau^n_M} -X^n_{s\wedge \hat\tau^n_M}\right\Vert^p_{E_{-1}}\right] \le C_{p,T,M}|t-s|^{p/2},
\end{equation*}
and therefore,
\begin{align*}
   &\E\left[ [X^n_{\cdot\wedge \hat{\tau}^n_M}]_{W^{\beta,p}(E_{-1})}^p \right]
   = \int_0^T\int_0^T\frac{\E \lVert X^n_{t\wedge \hat\tau^n_M} -X^n_{s\wedge \hat\tau^n_M}\rVert^p_{E_{-1}}}{|t-s|^{1+\beta p}}\,ds\,dt
   \\ & \qquad\qquad
    \le C_{p,T,M}\int_0^T\int_0^T|t-s|^{(1/2-\beta)p-1}\,ds\,dt.
\end{align*}
The right side of the above inequality is uniformly bounded with respect to $n$ when $\beta<1/2$. Given $\alpha\in (0,1/2)$, we can always choose $\beta, p$ so that $\beta<1/2$, $1<p<\infty$, and $\alpha<\beta-1/p$, then the inequality above and the Sobolev embedding \eqref{eq:Sobolev_emb_time} imply \eqref{eq:C_alpha_exp}, completing the proof of the lemma.
\end{proof}

Recall the Aubin-Lions-Simon theorem, which claims that
$L^\infty([0,T];E_1)\cap W^{\alpha, \infty}([0,T];E_{-1})$ is compactly embedded in $C([0,T];E_0)$ under the Assumption~\ref{Assump:Spaces} when $\alpha\in (0,1)$ (see~\cite[Corollary 9]{Si86}). Idenfitying functions in $W^{\alpha, \infty}([0,T];E_{-1})$ with their continuous modifications, we have
\[
L^\infty([0,T];E_1)\cap C^{0, \alpha}([0,T];E_{-1})
\subset\subset C([0,T];E_0).
\] 
Then, the family of the laws of $X^n$ is tight in $C([0,T];E_0)$ by Lemmas \ref{Lemma:BoundednessInProb} and \ref{Lemma:HolderBoundednessInProb}, and then the set of coupled laws of $\{(X^n,W )\}_{n\in\mathbb{N}}$ is tight in the product Polish space $C\left([0,T];E_0\right)\times C\left([0,T];\R\right)$, as a consequence of the tightness of the two components. Applying Prokhorov's theorem, we conclude that $\{(X^n,W )\}_{n\in\mathbb{N}}$ is a precompact set in the topology of weak convergence.

\section{Global Solution and Pathwise Uniqueness}\label{sec4}

In this section, we identify a weak cluster point of $\{(X^n,W )\}_{n\in\mathbb{N}}$ as a martingale solution to \eqref{eq:SDE}. Let us take a weakly convergent subsequence and still denote it by $\{(X^n,W )\}_{n\in\mathbb{N}}$ for brevity. By the Skorokhod theorem (see e.g.~\cite[Theorem 2.4]{da2014stochastic}), there exist a probability space $(\widetilde\Omega,\widetilde{\mathcal{A}},\widetilde{\mathbb P})$ and $C\left([0,T];E_0\right)\times C\left([0,T];\R\right)$-valued random variables $\{(\tilde X^n,\tilde W^n)\}_{n\in\N}$, $(\tilde X, \tilde W)$ living in this probability space, such that $(\tilde X^n,\tilde W^n)$ is distributed the same as $( X^n,W^n)$ for every $n\in\mathbb{N}$, and the sequence $(\tilde X^n,\tilde W^n)$ converges to $(\tilde X, \tilde W)$ in $C\left([0,T];E_0\right)\times C\left([0,T];\R\right)$ $\widetilde \Prob$-a.s.. 

For technicalities, we denote the filtration generated by $\tilde X,\tilde W$, and $\widetilde \Prob$-null sets by $(\mathcal{\widetilde G}_t)_{t\in[0,T]}$, and we define $\mathcal{\widetilde F}_t \coloneqq \cap_{s>t} \mathcal{\widetilde G}_s$, a filtration satisfying the standard assumption. The filtrations $(\mathcal{\widetilde G}^n_t)_{t\in[0,T]}$ and $(\mathcal{\widetilde F}^n_t)_{t\in[0,T]}$ for $\tilde X^n,\tilde W^n$ are defined similarly. With regard to $\tilde W$, $\tilde W^n$, and $\{(\tilde X^n,\tilde W^n)\}_{n\in\N}$, we have the next two (standard and technical) lemmas,\ whose proofs are postponed to the Appendix. 

\begin{lemma}\label{BM}
For every $n\in\mathbb{N}$, $\tilde W^n$ is an $(\mathcal{\widetilde F}^n_t)_{t\in[0,T]}$-adapted Brownian motion, and the limit process $\tilde W$ is an $(\mathcal{\widetilde F}_t)_{t\in[0,T]}$-adapted Brownian motion.
\end{lemma}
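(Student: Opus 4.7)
The plan is to split the claim into a fixed-$n$ statement and a limiting statement, and to reduce both to Lévy's characterization of Brownian motion (a continuous adapted martingale with quadratic variation $t$).

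\emph{Fixed $n$.} First I would observe that on the original filtered space $(\Omega,\mathcal{A},(\mathcal{F}_t)_t,\Prob)$, the Galerkin solution $X^n$ is $(\mathcal{F}_t)_t$-adapted because it is the strong solution of~\eqref{SDE_Galerkin} driven by $W$ with $\mathcal{F}_0$-measurable initial datum $\Pi_n X_0$. Hence $W_t - W_s$, being independent of $\mathcal{F}_s$, is in particular independent of $\sigma(X^n_r, W_r : r \le s)$, and $W$ is a Brownian motion with respect to the natural filtration of $(X^n, W)$. Since $(\tilde X^n, \tilde W^n)$ has the same joint law on $C([0,T]; E_0) \times C([0,T]; \R)$ as $(X^n, W)$ by the Skorokhod representation, this property is inherited by $\tilde W^n$ relative to $(\mathcal{\widetilde G}^n_t)_t$, and the right-continuous augmentation $(\mathcal{\widetilde F}^n_t)_t$ preserves the martingale and quadratic-variation properties of the continuous martingale $\tilde W^n$.

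\emph{The limit.} Adaptedness of $\tilde W$ to $(\mathcal{\widetilde F}_t)_t$ is immediate from the construction of the filtration. For the Brownian-motion property I would verify that for every $0 \le s < t \le T$ and every bounded continuous functional $\phi$ on $C([0,s]; E_0) \times C([0,s]; \R)$,
\begin{align*}
\widetilde{\E}\!\left[(\tilde W_t - \tilde W_s)\, \phi((\tilde X, \tilde W)|_{[0,s]})\right] &= 0, \\
\widetilde{\E}\!\left[((\tilde W_t - \tilde W_s)^2 - (t-s))\, \phi((\tilde X, \tilde W)|_{[0,s]})\right] &= 0.
\end{align*}
Both identities hold with $(\tilde X, \tilde W)$ replaced by $(\tilde X^n, \tilde W^n)$ by the previous step. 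Passing $n \to \infty$ combines the $\widetilde\Prob$-almost-sure convergence $(\tilde X^n, \tilde W^n) \to (\tilde X, \tilde W)$ in $C([0,T]; E_0) \times C([0,T]; \R)$ with uniform integrability of the random variables $\tilde W^n_t - \tilde W^n_s$ and $(\tilde W^n_t - \tilde W^n_s)^2$, which is immediate because each increment is $\mathcal{N}(0, t-s)$ and so has all $L^p$ norms bounded uniformly in $n$. A monotone-class argument then extends the two identities to all bounded $\mathcal{\widetilde G}_s$-measurable $\phi$, yielding the martingale property under $(\mathcal{\widetilde G}_t)_t$; the passage to the right-continuous augmentation $(\mathcal{\widetilde F}_t)_t$ is harmless for a continuous martingale. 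Lévy's theorem then concludes.

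\emph{Main obstacle.} The only delicate step is the exchange of limit and expectation, which the Gaussian moment bound settles at once. The remaining pieces — continuity of the restriction map $(\tilde X, \tilde W) \mapsto (\tilde X, \tilde W)|_{[0,s]}$ on path space, so that continuous functionals generate $\mathcal{\widetilde G}_s$, the monotone-class extension, and the right-continuity augmentation — are routine bookkeeping.
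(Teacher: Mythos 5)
Your proposal is correct, and its first half (the fixed-$n$ statement) follows the paper's proof essentially verbatim: adaptedness of $X^n$ to $(\mathcal F_t)_t$ gives independence of $W_t-W_s$ from $\sigma\{X^n_r,W_r\mid r\le s\}$, and equality of the joint laws of $(X^n,W)$ and $(\tilde X^n,\tilde W^n)$ transfers this to $\tilde W^n$ and $(\widetilde{\mathcal G}^n_t)_t$.

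For the limiting statement the two arguments diverge in the choice of key tool, though both are standard. The paper argues directly with independence: it notes that the $\widetilde\Prob$-a.s.\ limit of Brownian motions is a Brownian motion, and that the independence of $\tilde W^n_t-\tilde W^n_s$ from each finite tuple $(\tilde W^n_{s_1},\tilde X^n_{s_1},\dots,\tilde W^n_{s_k},\tilde X^n_{s_k})$ with $s_k\le s$ passes to the a.s.\ limit, yielding independence of $\tilde W_t-\tilde W_s$ from $\widetilde{\mathcal G}_s$ without any moment estimates. You instead verify the two martingale identities (for $\tilde W_t$ and $\tilde W_t^2-t$) against bounded continuous functionals of the restricted path, pass to the limit using uniform integrability of the Gaussian increments, extend by a monotone-class argument, and invoke L\'evy's characterization. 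Your route requires slightly more bookkeeping (the $L^p$ bounds, the monotone class step, the explicit appeal to L\'evy) but is more robust: it would survive in settings where the approximating processes are only martingales rather than exact Brownian motions, whereas the paper's independence argument is shorter precisely because each $\tilde W^n$ is already known to be a Brownian motion with increments independent of the joint past. Both treatments dispose of the passage from $(\widetilde{\mathcal G}_t)_t$ to the right-continuous augmentation $(\widetilde{\mathcal F}_t)_t$ by the same standard continuity argument (the paper cites Proposition 2.5 of Bass for this). No gaps.
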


\begin{lemma}\label{WS}
For every $n\in \N$, $(\widetilde \Omega,  \widetilde{\mathcal{A}},(\widetilde{\mathcal{F}}^n_t)_{t}, \widetilde \Prob,\tilde X^n, \tilde W^n)$ is a weak solution of \eqref{SDE_Galerkin}. 
\end{lemma}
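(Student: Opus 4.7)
The plan is to transfer the Galerkin SDE identity satisfied by $(X^n, W)$ on the original probability space to $(\tilde X^n, \tilde W^n)$ on $(\widetilde\Omega, \widetilde{\mathcal{A}}, \widetilde\Prob)$, leveraging equality of laws together with a Riemann-sum approximation of the stochastic integral. Since $(X^n, W)$ solves \eqref{SDE_Galerkin} and $X^n_0 = \Pi_n X_0$ is deterministic, equidistribution immediately gives $\tilde X^n_0 = \Pi_n X_0$ $\widetilde\Prob$-a.s., so the content of the lemma reduces to proving
\begin{align*}
\tilde X^n_t = \Pi_n X_0 + \int_0^t b_n(\tilde X^n_s)\,ds + \int_0^t \sigma_n(\tilde X^n_s)\,d\tilde W^n_s
\end{align*}
$\widetilde\Prob$-a.s.\ for every $t \in [0,T]$, where the It\^o integral is taken with respect to $(\widetilde{\mathcal{F}}^n_t)$. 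The difficulty is that the stochastic integral is not a continuous functional of the pair $(x, w) \in C([0,T]; E^n) \times C([0,T]; \R)$, so the SDE identity cannot be transferred by the continuous-image principle alone.

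To circumvent this, I would fix $t \in [0,T]$ together with a partition $0 = t_0 < \ldots < t_N = t$ of mesh $\delta_N \to 0$, and introduce the functional
\begin{align*}
\Psi_N(x, w) := x(t) - x(0) - \sum_{k=0}^{N-1} b_n(x(t_k))(t_{k+1}-t_k) - \sum_{k=0}^{N-1} \sigma_n(x(t_k))\bigl(w(t_{k+1})-w(t_k)\bigr),
\end{align*}
which is continuous on $C([0,T]; E^n) \times C([0,T]; \R)$ thanks to continuity of $b_n, \sigma_n$. To control the local-Lipschitz coefficients, I would localize via the stopping times $\tau^n_M := \inf\{s \ge 0: \|X^n_s\|_{E^n} \ge M\} \wedge T$ and their tilde counterparts $\tilde\tau^n_M$. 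Standard Riemann-sum convergence for It\^o integrals gives $\Psi_N(X^n_{\cdot \wedge \tau^n_M}, W) \to 0$ in $\Prob$-probability as $N \to \infty$, and equidistribution transfers this to $\Psi_N(\tilde X^n_{\cdot \wedge \tilde\tau^n_M}, \tilde W^n) \to 0$ in $\widetilde\Prob$-probability. By Lemma~\ref{BM}, $\tilde W^n$ is an $(\widetilde{\mathcal{F}}^n_t)$-Brownian motion and $\tilde X^n$ is adapted to this filtration by construction, so $\int_0^{t \wedge \tilde\tau^n_M} \sigma_n(\tilde X^n_s)\,d\tilde W^n_s$ is a well-defined It\^o integral, and the Riemann sums appearing in $\Psi_N(\tilde X^n_{\cdot \wedge \tilde\tau^n_M}, \tilde W^n)$ converge to it in probability. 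Identifying the two limits yields the desired identity on $\{\tilde\tau^n_M > t\}$; sending $M \to \infty$ using path-continuity of $\tilde X^n$ in $E^n$, then running $t$ over a countable dense subset of $[0,T]$ and exploiting path-continuity of every term, produces the full identity $\widetilde\Prob$-a.s.\ for all $t \in [0,T]$ simultaneously.

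The main obstacle is precisely the absence of continuity of the It\^o integral as a functional of $(x, w)$, which prevents a direct push-forward of the SDE identity along the Skorokhod representation; the Riemann-sum detour is the standard remedy, and the only technical care needed is that the approximation errors be uniformly controllable after localization, which is guaranteed because $b_n, \sigma_n$ are continuous and $X^n, \tilde X^n$ are bounded on $\{\tau^n_M>t\}$, $\{\tilde\tau^n_M>t\}$ respectively. Once Lemma~\ref{BM} provides the Brownian structure of $\tilde W^n$ with respect to the correct filtration, the remainder of the argument is essentially routine.
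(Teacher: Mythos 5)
Your proposal is correct and follows essentially the same route as the paper: discretize the integrals by left-endpoint Riemann sums after localizing by exit times, transfer the resulting Borel functionals of $(X^n,W)$ to $(\tilde X^n,\tilde W^n)$ via equality of laws, and identify the limits using Lemma~\ref{BM}. The only point needing a touch more care is that $\Psi_N(X^n_{\cdot\wedge\tau^n_M},W)$ converges to zero only on the event $\{\tau^n_M>t\}$ rather than globally; the paper sidesteps this by inserting the indicator $\mathds{1}_{[0,\tau^n_m)}$ into the integrands instead of stopping the path, but your subsequent restriction to $\{\tilde\tau^n_M>t\}$ and the limit $M\to\infty$ repair this in the same way.
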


We aim to show that $(\tilde X,\tilde W)$, the $\Prob$-a.s.\ limit of $\{(\tilde X^n,\tilde W^n)\}_{n\in\N}$, is a global weak solution of the limit model~\eqref{eq:SDE}. This requires us to study the convergence for the stochastic integrals. The literature offers several classical arguments on this topic, beginning with the foundational work of Skorokhod \cite[Theorem on p. 32]{Skorokhod1965} and followed by numerous variants (see, for instance, \cite[Lemma 5.2]{GyongyMartinez2001}, \cite[Lemma 3.2]{Luo2011} and \cite[Section 4.3.5]{bensoussan1995}). Without claiming novelty, we present here an infinite-dimensional convergence lemma inspired by \cite[Lemma 2.1]{debussche2011local} and useful for future reference. Compared to \cite[Lemma 2.1]{debussche2011local}, we refine their proof to establish the convergence of stochastic integrals in a stronger topology, specifically convergence in probability with respect to the supremum norm in time, rather than the $L^2$-in-time norm. Given the classical nature of this result, we postpone its proof to the appendix.

Let $\mathfrak{U}, \mathcal{H}$ be two separable Hilbert spaces and $\{\mathbf{e}_k\}_{k\in \N}$ be a complete orthonormal basis of $\mathfrak{U}$. We use $\mathcal{L}(\mathfrak{U}, \mathcal{H})$ for the set of bounded linear operators from $\mathfrak{U}$ to $\mathcal{H}$ and denote by 
\begin{equation*}
    L_2(\mathfrak{U}, \mathcal H)\coloneqq \bigg\{ G \in \mathcal{L}(\mathfrak{U}, \mathcal{H})\, : \, \sum_{k\ge 1} \lVert G \mathbf{e}_k \rVert_{\mathcal H}^2 <  \infty\bigg\}
\end{equation*}
the space of Hilbert-Schmidt operators from  $\mathfrak{U}$ to $ \mathcal{H}$. We endow $L_2(\mathfrak{U}, \mathcal H)$ with its Borel $\sigma$-algebra and the elements in $L_2(\mathfrak{U}, \mathcal H)$ with the norm 
\begin{equation*}
    \Vert G\Vert_{L_2(\mathfrak{U}, \mathcal H)}= \bigg( \sum_{k\ge 1} \lVert G \mathbf{e}_k \rVert_{\mathcal H}^2 \bigg)^{1/2}, \quad \forall G \in L_2(\mathfrak{U}, \mathcal H).
\end{equation*}
When there is no risk of confusion, we abbreviate $G_k=G\mathbf{e}_k$.
Besides, let $\{W_k\}_{k\in\mathbb{N}}$, $\{W_k^n\}_{k\in\mathbb{N}}$ be sequences of independent Brownian motions on a probability space $(\Omega, \mathcal{ A}, \mathbb{ P})$. Then, $ W= \sum_{k\ge 1} W_k \mathbf{e}_k$ defines a cylindrical Wiener process over $\mathfrak{U}$, and $ W^n= \sum_{k\ge 1} W^n_k \mathbf{e}_k$ are cylindrical Wiener processes over $\mathfrak{U}$. Note that $W$ and $W^n$ are adapted to the augmentation of their respective natural filtrations, say $(\mathcal{ F}_t)_{t\in[0,T]}$ and $(\mathcal{ F}^n_t)_{t\in[0,T]}$. Within this context, we claim the following convergence.

\begin{lemma} \label{lemma:StochIntConvergence}
Suppose that $W= \sum_{k\ge 1} W_k \mathbf{e}_k$ and $ W^n= \sum_{k\ge 1} W^n_k \mathbf{e}_k$ are cylindrical Wiener processes over $\mathfrak{U}$, adapted with respect to the filtrations $(\mathcal{ F}_t)_{t\in[0,T]}$ and $(\mathcal{ F}^n_t)_{t\in[0,T]}$ respectively. Also suppose that $\Prob$-a.s., $G$ is an $(\mathcal{ F}_t)_{t\in[0,T]}$-progressively measurable process in $L^2([0,T]; L_2(\mathfrak{U}, \mathcal H))$ and $\{G^n\}_{n\in\mathbb{N}}$ are $(\mathcal{ F}^n_t)_{t\in[0,T]}$-progressively measurable processes in $L^2([0,T]; L_2(\mathfrak{U}, \mathcal H))$ for all $n\in\mathbb{N}$. As $n\to\infty$, if
\begin{subequations}\label{eq:ConvAsmp}
\begin{gather}
   W^{n}_k \rightarrow  W_k  \quad
  \textrm{ in probability in }
  C([0,T]; \R)
  \textrm{ for all }
   k\in\mathbb{N},
  \label{eq:ConvAsmpNoise}\\
   G^{n} \rightarrow  G \quad
  \textrm{ in probability in }
  L^2([0,T]; L_2(\mathfrak{U},  \mathcal H)), \label{eq:ConvAsmpStochIntegrand}
\end{gather}
\end{subequations}
then
\begin{equation}\label{eq:StochasticVitaliConc}
\sup_{t\in[0,T]}\left\Vert \int_0^t  G^n\, d W^n - \int_0^t  G\, d W \right\Vert_{\mathcal H} \to 0
\quad \textrm{ in probability}.
\end{equation}
\end{lemma}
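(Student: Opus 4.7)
The plan is to upgrade the $L^2$-in-time convergence of \cite{debussche2011local} to sup-in-time convergence by combining their truncation argument with the Burkholder--Davis--Gundy (BDG) inequality applied to suitably stopped processes. I would decompose $\int_0^t G^n\,dW^n-\int_0^t G\,dW$ into a truncation error that retains only the first $N$ modes of the cylindrical expansion $W=\sum_k W_k\mathbf e_k$, plus the difference in those first $N$ modes, and control each piece separately. Since $G^n\to G$ in probability in $L^2([0,T];L_2(\mathfrak U,\mathcal H))$, the family $\{\Vert G^n\Vert_{L^2([0,T];L_2)}\}_n$ is tight on $[0,\infty)$; for any $\eta>0$ I would choose $R>0$ with $\sup_n\Prob(\Vert G^n\Vert_{L^2([0,T];L_2)}^2>R)+\Prob(\Vert G\Vert_{L^2([0,T];L_2)}^2>R)<\eta$ and introduce the stopping times $\tau^n_R=\inf\{t:\int_0^t\Vert G^n(s)\Vert_{L_2}^2\,ds\ge R\}\wedge T$, and $\tau_R$ analogously. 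Outside $\{\tau^n_R<T\}\cup\{\tau_R<T\}$, the stopped and unstopped integrals coincide, so it suffices to estimate the stopped versions.

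Let $P_N$ be the orthogonal projection of $\mathfrak U$ onto $\Span\{\mathbf e_1,\dots,\mathbf e_N\}$. For the tail of the mode expansion, BDG applied to the stopped integral yields
\begin{equation*}
\E\sup_{t\le T}\left\Vert\int_0^{t\wedge\tau^n_R}G^n(I-P_N)\,dW^n\right\Vert_{\mathcal H}^2\le C\,\E\int_0^{\tau^n_R}\sum_{k>N}\Vert G^n_k\Vert_{\mathcal H}^2\,ds,
\end{equation*}
together with the analogous bound for the pair $(G,W)$ stopped at $\tau_R$. Equi-integrability of the tails, which follows from the in-probability $L^2$-convergence $G^n\to G$ combined with the uniform stopped bound $\int_0^{\tau^n_R}\Vert G^n\Vert_{L_2}^2\,ds\le R$, makes the right-hand side $<\eta^2$ uniformly in $n$ provided $N=N(\eta,R)$ is large, and Markov then converts this into a uniform-in-probability tail bound.

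It then remains to show, for each fixed $k$, that $\sup_{t\le T}\Vert\int_0^t G^n_k\,dW^n_k-\int_0^t G_k\,dW_k\Vert_{\mathcal H}\to 0$ in probability. Passing to a subsequence, I can assume $W^n_k\to W_k$ uniformly a.s.\ and $G^n_k\to G_k$ a.s.\ in $L^2([0,T];\mathcal H)$. The plan is to approximate $G_k$ and $G^n_k$ by deterministic-partition step processes $G^{(m)}_k$ and $G^{(m)}_{k,n}$; the corresponding integrals become finite Riemann sums $\sum_i G^\star_k(s_i)(W^\star_k(s_{i+1}\wedge t)-W^\star_k(s_i\wedge t))$ whose uniform-in-$t$ convergence is purely pathwise, coming from $W^n_k\to W_k$ in sup-norm together with $G^n_k(s_i)\to G_k(s_i)$ via continuous mapping. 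The approximation errors $\int(G^n_k-G^{(m)}_{k,n})\,dW^n_k$ and $\int(G_k-G^{(m)}_k)\,dW_k$ are again handled by BDG on the stopped processes. The main obstacle, and the reason the present lemma is a genuine refinement of \cite{debussche2011local}, is that $G^n,W^n$ are adapted to $(\mathcal F^n_t)$ while $G,W$ are adapted to $(\mathcal F_t)$, so mixed integrals such as $\int G\,dW^n$ are not well-defined a priori; the argument above circumvents this by bridging the honest integrals $\int G^n\,dW^n$ and $\int G\,dW$ only through the deterministic-partition Riemann sums, which are legitimate algebraic combinations of random variables regardless of the driving filtration, and by relying on BDG after the localization of Step~1 to promote the $L^2$-in-time bounds of \cite{debussche2011local} to sup-in-$t$ bounds.
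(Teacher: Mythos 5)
Your localization and mode-truncation steps are sound and essentially coincide with the paper's: the tail $\sum_{k>N}$ is handled there by the Lenglart-type inequality $\Prob(\sup_t\Vert\int_0^t\Phi\,dW\Vert>a)\le b/a^2+\Prob(\int_0^T\Vert\Phi\Vert^2\,ds>b)$, which plays the same role as your BDG-plus-stopping argument, and the reduction to finitely many modes is identical. The genuine gap is in your treatment of the fixed-mode difference $\int_0^t G^n_k\,dW^n_k-\int_0^t G_k\,dW_k$ via Riemann sums on a deterministic partition. First, $G^n_k$ and $G_k$ are only assumed progressively measurable and in $L^2([0,T];\mathcal H)$, so the pointwise evaluations $G^n_k(s_i)$, $G_k(s_i)$ at prescribed times are not well-defined (they depend on the representative of the $L^2$ class). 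Second, even granting continuous versions, a.s.\ convergence $G^n_k\to G_k$ in $L^2([0,T];\mathcal H)$ does not imply $G^n_k(s_i)\to G_k(s_i)$ at fixed deterministic times $s_i$ (take $G^n_k-G_k$ to be a bump of height one and width $1/n$ centered at $s_i$), so the ``continuous mapping'' step fails. Third, you need the step-approximation error $\Vert G^n_k-G^{(m)}_{k,n}\Vert_{L^2([0,T];\mathcal H)}$ to be small \emph{uniformly in $n$} for a fixed partition, which pointwise sampling cannot deliver and which you do not address.

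The repair is to replace pointwise sampling by a local time-average, and this is exactly what the paper does: it introduces the smoothing operator $\mathcal{R}_\rho\Phi(t)=\rho^{-1}\int_0^t e^{-(t-s)/\rho}\Phi(s)\,ds$, which is an $L^2$ contraction with $\mathcal{R}_\rho\Phi\to\Phi$ in $L^2$, so the replacement errors are controlled uniformly in $n$ by a triangle inequality through $G_k$, and the smoothed integrands have $W^{1,2}$ paths, so that $\int_0^t\mathcal{R}_\rho G^n_k\,dW^n_k$ integrates by parts into a pathwise expression in $(G^n_k,W^n_k)$ that passes to the limit using only $W^n_k\to W_k$ in $C([0,T];\R)$ and $G^n_k\to G_k$ in $L^2$. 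Your Riemann-sum idea becomes correct once $G^\star_k(s_i)$ is replaced by $\delta^{-1}\int_{s_{i-1}}^{s_i}G^\star_k(r)\,dr$ (a discrete analogue of $\mathcal{R}_\rho$), but as written the argument does not close.
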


\begin{theorem}[Global Existence]\label{thm:exist_global}
Under the Assumptions \ref{Assump:Spaces}, \ref{Assump:Projection}, \ref{Assump:drift}-(a,b,c,d), and \ref{Assump:sigma}-(a,b,c,d), there exists a global-in-time, weak solution to~\eqref{eq:SDE} with initial condition distributed as $X_0$.
\end{theorem}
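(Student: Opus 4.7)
The plan is to pass to the limit $n \to \infty$ in the Galerkin identity
\[
\tilde X^n_t = \tilde X^n_0 + \int_0^t b_n(\tilde X^n_s)\,ds + \int_0^t \sigma_n(\tilde X^n_s)\,d\tilde W^n_s,
\]
which holds in $E_{-1}$ by Lemma~\ref{WS}, and to verify that the $\widetilde{\Prob}$-a.s.\ limit $\tilde X$ supplied by the Skorokhod construction at the end of Section~3, together with $\tilde W$ (a Brownian motion on $(\widetilde{\mathcal F}_t)_t$ by Lemma~\ref{BM}) and the initial datum $\tilde X_0$, fulfils Definition~\ref{def:strong} on $(\widetilde\Omega, \widetilde{\mathcal A}, (\widetilde{\mathcal F}_t)_t, \widetilde{\Prob})$.

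A preliminary step is to transfer the uniform-in-$n$ $E_1$-bound of Lemma~\ref{Lemma:BoundednessInProb} to the Skorokhod side. Since $E_1 \subset\subset E_0$, weak compactness yields that $\|\cdot\|_{E_1}$ extends to a $[0,\infty]$-valued lower semicontinuous function on $E_0$, whence $x \mapsto \sup_{t \in [0,T]}\|x(t)\|_{E_1}$ is Borel on $C([0,T]; E_0)$. Equality in law of $\tilde X^n$ and $X^n$ on $C([0,T]; E_0)$ then transfers \eqref{eq:sup_bounded_in_prob} to $\tilde X^n$. For any $\epsilon > 0$ this yields an event $\widetilde\Omega_\epsilon$ with $\widetilde{\Prob}(\widetilde\Omega_\epsilon) \ge 1 - \epsilon$ on which $\{\tilde X^n\}_n$ (and, by lower semicontinuity under the a.s.\ convergence in $C([0,T]; E_0)$, also the limit $\tilde X$) is uniformly bounded in $C([0,T]; E_0)$; this is the compactness one needs to invoke the ``bounded on balls'' hypotheses on $b$ and $\sigma$.

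With this bound in hand, each term of the Galerkin identity converges directly. The initial condition $\tilde X^n_0 = \Pi_n \tilde X_0 \to \tilde X_0$ in $E_{-1}$ by the Remark following Assumption~\ref{Assump:Projection}. For the drift, continuity of $b: E_0 \to E_{-1}$ (Assumption~\ref{Assump:drift}-(a)) together with the pointwise convergence $\Pi_n \to \Id$ on $E_{-1}$ give, on $\widetilde\Omega_\epsilon$, the pointwise-in-$s$ convergence $b_n(\tilde X^n_s) \to b(\tilde X_s)$ in $E_{-1}$ under a uniform $E_{-1}$-bound; dominated convergence then delivers convergence of the drift integral in $C([0,T]; E_{-1})$ $\widetilde{\Prob}$-a.s. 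For the stochastic integral I apply Lemma~\ref{lemma:StochIntConvergence} with $\mathfrak U = \R$, $\mathcal H = E_{-1}$, $G^n_s = \sigma_n(\tilde X^n_s)$, $G_s = \sigma(\tilde X_s)$: hypothesis~\eqref{eq:ConvAsmpNoise} is immediate from the a.s.\ convergence $\tilde W^n \to \tilde W$, while~\eqref{eq:ConvAsmpStochIntegrand} follows from the analogous dominated convergence argument now applied to $\sigma$ via Assumption~\ref{Assump:sigma}-(a).

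Passing to the limit along the resulting subsequence, \eqref{eq:SDE2} holds in $E_{-1}$ $\widetilde{\Prob}$-a.s.\ on $\widetilde\Omega_\epsilon$ for each fixed $t$, and by continuity in $t$ of every term it holds for all $t \in [0,T]$ simultaneously on that event; letting $\epsilon \to 0$ promotes this to the whole of $\widetilde\Omega$. The integrability requirements on $b(\tilde X)$ and $\sigma(\tilde X)$ follow from $\tilde X \in C([0,T]; E_0)$ combined with boundedness on balls, and adaptedness to $(\widetilde{\mathcal F}_t)_t$ is automatic. A standard diagonal argument in $T$ extends the construction to $[0,\infty)$. The only genuine obstacle is the stochastic-integral passage, which is exactly what Lemma~\ref{lemma:StochIntConvergence} was designed to handle; the small measurability point used to transfer the $E_1$-bound to the Skorokhod copy of the process also deserves an explicit justification before the dominated convergence arguments can be invoked.
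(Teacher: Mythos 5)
Your proposal is correct and follows essentially the same route as the paper: pass to the limit in the Galerkin identity furnished by Lemma~\ref{WS}, handle the drift by continuity and boundedness on balls of $b$ together with $\Pi_n\to\Id$ and dominated convergence, and invoke Lemma~\ref{lemma:StochIntConvergence} for the stochastic integral. The only deviation is your preliminary transfer of the uniform $E_1$-bound to the Skorokhod copies via lower semicontinuity of $\|\cdot\|_{E_1}$ on $E_0$; this is a valid observation but not needed here, since the domination required by Assumptions~\ref{Assump:drift}-(a) and~\ref{Assump:sigma}-(a) only involves $E_0$-balls, and the $\omega$-wise uniform $E_0$-bound on $\{\tilde X^n\}_n$ already follows from the $\widetilde{\Prob}$-a.s.\ convergence in $C([0,T];E_0)$.
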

\begin{proof}[Proof of Theorem~\ref{thm:exist_global}]
We have proved that on a common probability space $(\widetilde{\Omega}, \widetilde{\mathcal{A}}, \widetilde{\Prob})$ there exist random variables $\{(\tilde X^n,\tilde W^n)\}_{n\in\N}$ and $(\tilde X, \tilde W)$ such that, $\widetilde{\Prob}$-a.s., $(\tilde X^n,\tilde W^n)$ converges to $(\tilde X, \tilde W)$ in $C\left([0,T];E_0\right)\times C\left([0,T];\R\right)$ and satisfies
\begin{equation}
        \tilde X^n_t=\tilde X^n_0 +\int_0^t b_n(\tilde X^n_s)\,ds+\int_0^t\sigma_n(\tilde X^n_s)\,d \tilde W^n_s,\quad \forall t\in[0,T].\label{eq:tilde_X}
\end{equation}
By the continuous embedding from $E_0$ to $E_{-1}$, $\Vert \tilde X^n_t-\tilde X_t\Vert_{E_{-1}}\to 0$ at all $t$ in $[0,T]$ $\widetilde{\Prob}$-a.s.\ as $n\to\infty$. Let us analyze the integral terms in~\eqref{eq:tilde_X}. Concerning the deterministic integral, we have
\begin{align*}
    &\left\Vert
    \int_0^t b_n(\tilde X^n_s)\,ds
    -\int_0^t b(\tilde X_s)\,ds
    \right\Vert_{E_{-1}}
  \\
  &\qquad \le 
    \int_0^t \left\Vert \Pi_n b(\tilde X^n_s)-\Pi_n b(\tilde X_s) \right\Vert_{E_{-1}} ds
    + 
    \int_0^t \left\Vert \Pi_n b(\tilde X_s)-b(\tilde X_s) \right\Vert_{E_{-1}} ds
    \\
    &\qquad \le
    \int_0^t \left\Vert b(\tilde X^n_s)-b(\tilde X_s) \right\Vert_{E_{-1}} ds
    + 
    \int_0^t \left\Vert \Pi_n b(\tilde X_s)-b(\tilde X_s) \right\Vert_{E_{-1}} ds.
\end{align*}
Using the continuity and boundedness on balls of $b$ (see Assumption~\ref{Assump:drift}), we may apply the dominated convergence theorem and conclude that the last two integrals in the above inequality converge to zero $\widetilde{\Prob}$-a.s., then as $n\to\infty$, 
\begin{equation*}
     \int_0^t b_n(\tilde X^n_s)\,ds
    \to \int_0^t b(\tilde X_s)\,ds \quad \text{in } E_{-1}\quad\widetilde{\Prob}\text{-a.s.}
\end{equation*}

For the stochastic integral term in \eqref{eq:tilde_X}, we apply Lemma~\ref{lemma:StochIntConvergence}. Then, it is enough to show that 
\begin{equation*}
    \int_0^t \left\Vert \sigma_n(\tilde X^n_s)-\sigma(\tilde X_s)\right\Vert^2_{E_{-1}}\,ds \to 0 \quad \widetilde{\Prob}\text{-a.s.}
\end{equation*}
as $n\to \infty$, which can be proved similarly as for the deterministic integral, via the following splitting,
\begin{equation*}
\left\Vert\sigma_n(\tilde X^n_s)-\sigma(\tilde X_s)\right\Vert_{E_{-1}}^2 \le 2\left\Vert\Pi_n \sigma (\tilde X^n_s)-\Pi_n \sigma(\tilde X_s)\right\Vert_{E_{-1}}^2 +2\left\Vert \Pi_n \sigma(\tilde X_s)-\sigma(\tilde X_s) \right\Vert_{E_{-1}}^2
\end{equation*}
and using the continuity and boundedness on balls of $\sigma$ (see Assumption~\ref{Assump:sigma}).
Therefore, by passing $n$ to infinity along a subsequence, we have that
\begin{equation*}
     \int_0^t\sigma_n(\tilde X^n_s)\,d \tilde W^n_s
    \to \int_0^t\sigma(\tilde X_s)\,d \tilde W_s\quad \text{in } E_{-1}\quad\widetilde{\Prob}\text{-a.s.}
\end{equation*}
Gathering all terms and passing to the $\widetilde{\Prob}$-a.s.\ limit in $E_{-1}$ in the equation \eqref{eq:tilde_X}, we obtain that, for every $t$,
\begin{equation*}
    \tilde X_t=\tilde X_0 +\int_0^t b(\tilde X_s)\,ds+\int_0^t\sigma(\tilde X_s)\,d \tilde W_s\quad \widetilde{\Prob}\text{-a.s.}
\end{equation*}
By the continuity of $\tilde X$-paths, the above equality holds for all $t\in [0,T]$ in a $\widetilde{\Prob}$-full measure set that does not depend on $t$. Hence, $(\widetilde{\Omega}, \widetilde{\mathcal{A}}, (\widetilde{\mathcal{F}}_t)_{t\in[0,T]}, \widetilde{\Prob},\tilde X, \tilde W)$ is a weak solution to \eqref{eq:SDE}, ending the proof.
\end{proof}

\begin{remark}\label{rmk:TW1}
    In \cite{TW2022}, the authors employed also a Lyapunov-type function to prove the global existence of a maximal solution, but, differently from here, they construct the maximal solution as a strong limit of approximate solutions. The existence of these approximate solutions is based on a regularization method that avoids the compact embedding $E_1\subset \subset E_0$ but requires the drift and the noise coefficient to be (asymptotically) quasi-monotonic, a property not assumed in our paper. While they proposed a non-explosion criterion for a general framework (\cite[Theorems 2.1 and 3.1]{TW2022}), here we identify an explicit radial structure for the noise, with only one Brownian motion, and an explicit no blow-up condition for the $\sigma$ in the SDE (in \cite[Example 1.1]{RTW2020}, a different example of noise is proposed; in \cite{ALONSOORAN2022244} and \cite{RohTan2021} two examples with a one-dimensional Brownian motion are also used but for a one-dimensional transport equation and a Camassa-Holm-type equation). Moreover, our no blow-up condition Assumption \ref{Assump:sigma}-(c) (i.e.\ $|f(x)|^2\ge 2g(x)-K$) needs to hold only where the drift is large (that is where $g$ is large), so that the noise intensity does not need to be large.
\end{remark}

\begin{theorem}[Pathwise Uniqueness]\label{Unique}
Under Assumptions \ref{Assump:Spaces},~\ref{Assump:Projection},~\ref{Assump:drift}, and~\ref{Assump:sigma}, the pathwise uniqueness holds for the SDE \eqref{eq:SDE}. That is, if $X^{(1)}$ and $X^{(2)}$ are two solutions to \eqref{eq:SDE} on the same filtered probability space $(\Omega,\mathcal{A},(\mathcal{F}_t)_t,\Prob)$ with respect to the same Brownian motion $W$, with the same initial condition $X_0$, then
\begin{equation*}
\Prob\big\{X^{(1)}_t = X^{(2)}_t, \, \forall t\in [0,T]\big\}=1.
\end{equation*}
\end{theorem}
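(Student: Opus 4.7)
The plan is to apply a classical $L^2$-in-$E_{-1}$ monotonicity argument to the difference of the two solutions, after localizing with a stopping time so that the local constants $k$ and $h$ from Assumptions~\ref{Assump:drift}-(e) and~\ref{Assump:sigma}-(e) become uniform constants.

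First I would set $Y_t := X^{(1)}_t - X^{(2)}_t$. By Definition~\ref{def:strong}, $Y$ has continuous paths in $E_0$ (hence in $E_{-1}$) with $Y_0 = 0$, and satisfies in $E_{-1}$
\begin{equation*}
Y_t = \int_0^t \bigl(b(X^{(1)}_s)-b(X^{(2)}_s)\bigr)\,ds + \int_0^t \bigl(\sigma(X^{(1)}_s)-\sigma(X^{(2)}_s)\bigr)\,dW_s.
\end{equation*}
I would then apply the Hilbert-space It\^o formula in $E_{-1}$ to $\|Y_t\|_{E_{-1}}^2$ (legitimate since $Y$ is $E_{-1}$-continuous, the drift is locally $L^1$ into $E_{-1}$, and the diffusion locally $L^2$ into $E_{-1}$), obtaining
\begin{align*}
\|Y_t\|_{E_{-1}}^2 &= 2\int_0^t \langle b(X^{(1)}_s)-b(X^{(2)}_s), Y_s\rangle_{E_{-1}}\,ds + \int_0^t \|\sigma(X^{(1)}_s)-\sigma(X^{(2)}_s)\|_{E_{-1}}^2\,ds \\
&\quad + 2\int_0^t \langle \sigma(X^{(1)}_s)-\sigma(X^{(2)}_s), Y_s\rangle_{E_{-1}}\,dW_s.
\end{align*}

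To neutralize the fact that $k$ and $h$ are only locally bounded, I would introduce the stopping time
\begin{equation*}
\tau_M := \inf\bigl\{t\in [0,T] : \|X^{(1)}_t\|_{E_0} \vee \|X^{(2)}_t\|_{E_0} \ge M\bigr\} \wedge T.
\end{equation*}
On $[0,\tau_M]$, Assumptions~\ref{Assump:drift}-(e) and~\ref{Assump:sigma}-(e) yield pointwise
$\langle b(X^{(1)}_s)-b(X^{(2)}_s),Y_s\rangle_{E_{-1}} \le k(M)\|Y_s\|_{E_{-1}}^2$
and
$\|\sigma(X^{(1)}_s)-\sigma(X^{(2)}_s)\|_{E_{-1}}^2 \le h(M)^2\|Y_s\|_{E_{-1}}^2$,
while the stopped stochastic integral is a genuine martingale (its integrand is bounded on $[0,\tau_M]$ in $E_{-1}$) and hence has zero expectation. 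Taking expectations at $t\wedge\tau_M$ and invoking Gronwall's lemma with zero initial datum gives $\E\|Y_{t\wedge\tau_M}\|_{E_{-1}}^2 = 0$ for every $t\in[0,T]$ and every $M$.

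Finally, since $X^{(1)},X^{(2)}\in C([0,T];E_0)$ almost surely, $\tau_M\uparrow T$ a.s.\ as $M\to\infty$, so Fatou yields $\|Y_t\|_{E_{-1}}=0$ for every $t\in[0,T]$ a.s.; the (injective) continuous embedding $E_0\hookrightarrow E_{-1}$ together with path continuity in $E_0$ then promotes this to indistinguishability of $X^{(1)}$ and $X^{(2)}$ on $[0,T]$. The main delicacy I anticipate is rigorously justifying the It\^o formula for $\|\cdot\|_{E_{-1}}^2$ at the available regularity (paths in $E_0$, coefficients in $E_{-1}$): the formula itself is just the standard separable Hilbert-space It\^o formula, but one should explicitly record the integrability of $\langle b(X^{(i)}_s),Y_s\rangle_{E_{-1}}$ and $\|\sigma(X^{(i)}_s)\|_{E_{-1}}^2$, which follows from Assumptions~\ref{Assump:drift}-(a) and~\ref{Assump:sigma}-(a) (boundedness on $E_0$-balls) combined with the $E_0$-continuity of the solutions.
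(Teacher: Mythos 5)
Your proposal is correct and follows essentially the same route as the paper: localize with the first exit time of $X^{(1)},X^{(2)}$ from $E_0$-balls, apply the It\^o formula to $\lVert Y\rVert_{E_{-1}}^2$, invoke Assumptions~\ref{Assump:drift}-(e) and~\ref{Assump:sigma}-(e) on $[0,\tau_M]$, and close with Gronwall and $\tau_M\uparrow T$. The only (immaterial) difference is that you take plain expectations and use the vanishing mean of the stopped martingale, whereas the paper controls $\E\bigl[\sup_{s}\lVert Y_{s\wedge\tau_M}\rVert_{E_{-1}}^{2p}\bigr]$ via the Burkholder--Davis--Gundy inequality; both yield the same conclusion.
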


\begin{proof}[Proof of Theorem~\ref{Unique}]
Suppose that $X^{(1)}$, $X^{(2)}$ are two solutions to~\eqref{eq:SDE} with $X^{(1)}_0=X^{(2)}_0=X_0$. Then, the difference process $Y:=X^{(1)}-X^{(2)}$ satisfies the following equality in $E_{-1}$,
\begin{align*}
Y_t = \int_0^t (b(X^{(1)}_r)-b(X^{(2)}_r)) dr +\int_0^t (\sigma(X^{(1)}_r)-\sigma(X^{(2)}_r)) dW_r, \quad \forall t\in [0,T].
\end{align*}
Let $M>0$ and $\tau^i_M$ be the first exit time of $X^{(i)}$ from $B_{M;E_0}$, where $i=1,2$. Denote $\tau_M=\tau^1_M\wedge \tau^2_M$. Applying the It\^o formula to $\|Y_t\|^2_{E_{-1}}$ over the interval $[0,\tau_M]$, we obtain 
\begin{align*}
& \Vert Y_{t\wedge \tau_M}\Vert_{E_{-1}}^2 
= 
2\int_0^{t\wedge \tau_M} 
<Y_r, b(X^{(1)}_r)-b(X^{(2)}_r)>_{E_{-1}} \,dr
+\int_0^{t\wedge \tau_M} \Vert\sigma(X^{(1)}_r)-\sigma(X^{(2)}_r)\Vert_{E_{-1}}^2 \,dr
\\ & \qquad\qquad +
2\int_0^{t\wedge \tau_M} <Y_r,\sigma(X^{(1)}_r)-\sigma(X^{(2)}_r)>_{E_{-1}} \,dW_r, \quad \forall t\in [0,T].
\end{align*}
Within the interval $[0,\tau_M]$, we may use Assumption~\ref{Assump:drift}-(e) on the drift function and claim that
\begin{align*}
&
<Y_r, b(X^{(1)}_r)-b(X^{(2)}_r)>_{E_{-1}} \leq
k (\Vert X^{(1)}_r\Vert_{E_{0}}\vee\Vert X^{(2)}_r\Vert_{E_{0}} )\Vert Y_r\Vert_{E_{-1}}^2 \le C_M\Vert Y_r\Vert_{E_{-1}}^2
\end{align*}
for some positive constant $C_M$ depending on $M$. (Here and in what follows, we denote the constant coefficient in inequalities by $C$, with implicit dependence indicated if needed; we allow $C$ to change from line to line.) Similarly, by Assumption \ref{Assump:sigma}-(e) on the diffusion coefficient, we have
\begin{align*}
& \Vert\sigma(X^{(1)}_r)-\sigma(X^{(2)}_r)\Vert_{E_{-1}}^2 
\leq
h^2 (\Vert X^{(1)}_r\Vert_{E_{0}}\vee\Vert X^{(2)}_r\Vert_{E_{0}} )\Vert Y_r\Vert_{E_{-1}}^2 \le C_M\Vert Y_r\Vert_{E_{-1}}^2.
\end{align*}
Then, by the Burkholder-Davis-Gundy inequality, we conclude that for any $p\in [2,\infty)$,
\begin{align*}
&\E\left[
\sup_{s\in[0,t]}
\left\Vert 
Y_{s\wedge \tau_M}
\right\Vert^{2p}_{E_{-1}}
\right]\\
&\enspace \leq 
C_{M,p}\E\left[
\left(\int_0^{t\wedge \tau_M} 
\Vert Y_r\Vert_{E_{-1}}^2
\,dr\right)^p\right] +
 C_p\E\left[\left(
 \int_0^{t\wedge \tau_M} \left\Vert \sigma(X^{(1)}_r)-\sigma(X^{(2)}_r)\right\Vert_{E_{-1}}^2
 \Vert Y_r\Vert_{E_{-1}}^2
 \,dr
\right)^{p/2}
\right]\\
&\enspace\leq 
C_{M,p}\E\left[
\left(\int_0^{t\wedge \tau_M} 
\Vert Y_r\Vert_{E_{-1}}^2
\,dr\right)^p\right] +
 C_{M,p}\E\left[\left(
 \int_0^{t\wedge \tau_M}
 \Vert Y_r\Vert_{E_{-1}}^4
 \,dr
\right)^{p/2}
\right]\\
&\enspace\le C_{M,T,p}\E\left[
\int_0^{t\wedge \tau_M} 
\Vert Y_r\Vert_{E_{-1}}^{2p}
\,dr\right]
\le C_{M,T,p}
\int_0^t 
\E\left[\sup_{s\in [0,r]}\Vert Y_{s\wedge \tau_M}\Vert_{E_{-1}}^{2p}\right]
\,dr.
\end{align*}
Applying Gr\"{o}nwall's lemma, we arrive at
\begin{align*}
\E\left[
\sup_{s\in[0,t\wedge\tau_M]}
\left\Vert 
Y_s
\right\Vert_{E_{-1}}^{2p}
\right]
=
0.
\end{align*}
Since both $X^{(1)}$ and $X^{(2)}$ live in $C([0,T]; E_0)$ (see Definition~\ref{def:weak}), then, $\Prob$-a.s., $\tau^1_M\to T$ and $\tau^2_M\to T$ as $M\to\infty$, and then $Y_t=0$ at all $t\in [0,T]$ $\Prob$-a.s.. This ends the proof.
\end{proof}

\section{The Stochastic Euler Equations}

In this section, we address the stochastic Euler equation on a $d$-dimensional smooth (i.e.\ of class $C^\infty$), bounded, and simply connected domain $D$, with $d=2,3$:
\begin{subequations}\label{eq:stoch_Euler}
    \begin{gather}
  du +(u\cdot \nabla) u dt +\nabla p dt = \sigma(u)dW_t\text{ on }D,\label{eq:stoch_Euler_1}\\      \operatorname{div} u=0\text{ on }D,\label{divfree}\\
   u\cdot n=0\text{ on }\partial D.   
    \end{gather}
\end{subequations}
Here, $u:[0,T]\times D\times \Omega\to \R^d$ is the velocity field, $p:[0,T]\times D\times \Omega\to \R$ is the pressure field, and $W$ is a one-dimensional Brownian motion on a filtered probability space $(\Omega,\mathcal{A},(\mathcal{F}_t)_t,\Prob)$ satisfying the standard assumption. Besides, $n$ is the outer normal on $\partial D$, and the equation, $u\cdot n=0$, encodes the slip boundary condition. Both $u$ and $p$ are unknown functions, whereas the noise coefficient $\sigma$ will be specified later. 

Let $s\in\mathbb{N}$. We define the following (standard) function spaces 
\begin{equation*}
\mathcal{H}^s(D)=\left\{u\in H^s(D)\mid \operatorname{div} u=0\text{ on }D\text{ and }u\cdot n=0\text{ on }\partial D\right\},
\end{equation*}
where $H^s(D)$ refers to the set of functions whose weak derivatives up to order-$s$ belong to $L^2(D)$. We endow $\mathcal{H}^s(D)$ with the $H^s(D)$-scalar product. Note that $\mathcal{H}^s(D)$ is a separable Hilbert space for every positive integer $s$, as it is a closed subspace of the separable Hilbert space $H^s(D)$. With this notation, now we introduce $\sigma$.
\begin{assumption}\label{hp:noise_Euler}
The diffusion coefficient $\sigma:\mathcal{H}^s(D)\to \mathcal{H}^{s-1}(D)$ has the form of
\begin{align*}
\sigma(u)=c(1+\|u\|_{W^{1,\infty}(D)}^2)^{\beta/2} u
\end{align*}
for some $\beta\ge 1/2$ and $c>0$. If $\beta=1/2$, we assume additionally that $c>\sqrt{2\tilde C}$ for a specified constant $\tilde C$ (See~\eqref{eq:g(x)} below, note that $\tilde C$ depends only on $D$ and $s$, but not on the initial condition).
\end{assumption}

Under the Assumption~\ref{hp:noise_Euler} and the incompressibility condition~\eqref{divfree}, we can recover the pressure function $p$ out of~\eqref{eq:stoch_Euler}. Clearly, $p$ solves
\begin{align*}
  -\Delta p &= \operatorname{div} \big((u\cdot \nabla) u\big)\text{ on } D.
\end{align*}
Recall the Leray projector~ $\mathcal{P}$, which is the orthogonal projection of $L^2(D)$ onto its closed subspace $\mathcal{H}^0(D)$. Equivalently, for every $v\in L^2(D)$, we have $\mathcal{P}v=v-\mathcal{Q}v$, where $\mathcal{Q}v=-\nabla w$, and $w$ is uniquely defined up to an additive constant by the elliptic Neumann problem
\begin{align*}
  -\Delta w &= \operatorname{div} v\text{ on } D,\\
  \frac{\partial w}{\partial n} &=v\cdot n \text{ on } \partial D.
\end{align*}
Hence, applying $\mathcal{P}$ to~\eqref{eq:stoch_Euler_1}, we can eliminate the pressure gradient and interpret the stochastic Euler equations~\eqref{eq:stoch_Euler} as a stochastic evolution equation of type-\eqref{eq:SDE} on $\mathcal{H}^s(D)$ with $s\in \mathbb{N}$,
\begin{align*}
du = b(u)dt +\sigma(u)dW_t,
\end{align*}
where $\sigma(u)$ was defined in Assumption~\ref{hp:noise_Euler}, and 
\begin{equation*}
 b(u) = -\mathcal{P}[(u\cdot\nabla) u]. 
\end{equation*}

The main result of this section is the following:
\begin{theorem}\label{thm:ApplicationToEuler}
Let $s\in \mathbb{N}$ and $s>d/2+2$. Under the Assumption~\ref{hp:noise_Euler}, the stochastic Euler equations~\eqref{eq:stoch_Euler} have a unique, global-in-time, strong solution in $\mathcal{H}^{s}(D)$ for every initial condition $u_0$ in $\mathcal{H}^{s+1}(D)$.
\end{theorem}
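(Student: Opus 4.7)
The plan is to cast the stochastic Euler system~\eqref{eq:stoch_Euler} as an instance of the abstract SDE~\eqref{eq:SDE} and invoke Theorem~\ref{thm:exist_strong}. I would set
\[
E_1 := \mathcal{H}^{s+1}(D), \qquad E_0 := \mathcal{H}^s(D), \qquad E_{-1} := \mathcal{H}^{s-1}(D),
\]
with $b(u) := -\mathcal{P}[(u\cdot\nabla) u]$ and $\sigma$ as in Assumption~\ref{hp:noise_Euler}. Assumption~\ref{Assump:Spaces} then follows from Rellich--Kondrachov (applied to the closed subspaces $\mathcal{H}^k$) and the classical Sobolev interpolation $\|v\|_{H^s}\le M\|v\|_{H^{s+1}}^{1/2}\|v\|_{H^{s-1}}^{1/2}$ with $\theta=1/2$. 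For Assumption~\ref{Assump:Projection} I would use the basis provided by Lemma~\ref{Euler:basis}: simultaneous eigenfunctions of a Stokes-type self-adjoint operator on $\mathcal{H}^0(D)$, orthogonal both in $E_1$ and in $E_{-1}$, which makes each $\Pi_n$ a contraction in $E_1$ (with $C=1$).

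Assumption~\ref{Assump:drift}-(a) comes from the Sobolev product estimate $\|(u\cdot\nabla)u\|_{H^{s-1}}\le C\|u\|_{H^s}^2$ together with boundedness of $\mathcal{P}$ on $H^{s-1}$ on smooth domains (using $s-1>d/2$). The crux is part~(b). Because each $E^n$ consists of divergence-free tangential fields, $\mathcal{P}x=x$ for $x\in E^n$, and because the Galerkin projector is self-adjoint in $E_1$, one has $\langle b_n(x),x\rangle_{E_1}=-\langle (x\cdot\nabla)x,x\rangle_{H^{s+1}}$. A standard Kato--Ponce / Moser commutator argument on the bounded domain then yields
\[
|\langle b_n(x),x\rangle_{H^{s+1}}|\le \tilde C\,\|x\|_{W^{1,\infty}(D)}\,\|x\|_{H^{s+1}}^2,
\]
which gives~(b) with $g(u):=\tilde C\,\|u\|_{W^{1,\infty}(D)}$; this $g$ is bounded on $E_1$-balls since $s+1>d/2+2$ forces $E_1\hookrightarrow W^{1,\infty}$. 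Parts~(c)--(e) are standard: on each finite-dimensional $E^n$ the quadratic map $b_n$ is smooth, and bilinearity of the Euler nonlinearity together with the algebra property of $H^{s-1}$ (from $s-1>d/2$) yields the local monotonicity bound in $E_{-1}$.

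For Assumption~\ref{Assump:sigma} the radial structure $\sigma(u)=f(u)\,u$ with $f(u):=c(1+\|u\|_{W^{1,\infty}}^2)^{\beta/2}$ is built into the ansatz, so the first line of~(b) holds on all of $\{g>G\}$ and the fallback linear-growth clause is vacuous. Parts~(a), (d), (e) follow from writing $\sigma$ as a smooth scalar function of $u$ times $u$, plus the fact that $u\mapsto\|u\|_{W^{1,\infty}}$ is locally Lipschitz on $E_0$-balls (since $E_0\hookrightarrow W^{1,\infty}$). The heart of the matter is~(c), which amounts to
\[
c^2\bigl(1+\|u\|_{W^{1,\infty}}^2\bigr)^{\beta}\ge 2\tilde C\,\|u\|_{W^{1,\infty}}-K
\]
for $G$ chosen sufficiently large and some $K\ge 0$: when $\beta>1/2$ the left-hand side dominates asymptotically for any $c>0$, whereas in the borderline case $\beta=1/2$ the inequality reduces at infinity to comparing the coefficients $c^2$ and $2\tilde C$ of $\|u\|_{W^{1,\infty}}$, which is exactly why Assumption~\ref{hp:noise_Euler} imposes $c>\sqrt{2\tilde C}$.

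With all assumptions verified, Theorem~\ref{thm:exist_strong} produces a unique global-in-time strong solution $u\in C([0,T];\mathcal{H}^s(D))$, and the pressure $p$ is recovered \emph{a posteriori} from the Neumann problem $-\Delta p = \operatorname{div}((u\cdot\nabla)u)$ with $\partial p/\partial n= -((u\cdot\nabla)u)\cdot n$ on $\partial D$. The main obstacle I anticipate is a clean execution of the $H^{s+1}$ commutator estimate \emph{at the Galerkin level} on a bounded domain with boundary: one must exploit the specific basis of Lemma~\ref{Euler:basis} so that $\mathcal{P}$ commutes away from the inner product and so that the classical Kato-type arguments, usually stated on $\mathbb{R}^d$ or on the torus, go through with a constant $\tilde C$ depending only on $D$ and $s$. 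Pinning down that constant is what makes the threshold $c>\sqrt{2\tilde C}$ in the critical case $\beta=1/2$ meaningful.
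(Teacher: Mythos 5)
Your overall strategy is exactly the paper's: take $E_1=\mathcal{H}^{s+1}$, $E_0=\mathcal{H}^{s}$, $E_{-1}=\mathcal{H}^{s-1}$, use the doubly orthogonal eigenbasis of Lemma~\ref{Euler:basis}, set $g(u)=\tilde C\|u\|_{W^{1,\infty}}$ via the commutator estimate, and read off Assumption~\ref{Assump:sigma}-(c) from the elementary inequality comparing $c^2(1+\|u\|_{W^{1,\infty}}^2)^\beta$ with $2\tilde C\|u\|_{W^{1,\infty}}$, distinguishing $\beta>1/2$ from the critical case $c>\sqrt{2\tilde C}$, $\beta=1/2$. Most of the verification is sound and matches the paper.

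There is, however, one step that would fail as you describe it: the verification of the monotonicity condition \ref{Assump:drift}-(e) by ``bilinearity together with the algebra property of $H^{s-1}$.'' Writing $v\cdot\nabla v-w\cdot\nabla w=v\cdot\nabla(v-w)+(v-w)\cdot\nabla w$, the algebra property only gives $\|v\cdot\nabla(v-w)\|_{H^{s-1}}\lesssim\|v\|_{H^{s-1}}\|v-w\|_{H^{s}}$, which after Cauchy--Schwarz produces $\|v-w\|_{H^{s}}\|v-w\|_{H^{s-1}}$ rather than the required $\|v-w\|_{H^{s-1}}^2$: you lose a derivative on the difference and the estimate does not close in $E_{-1}$. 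The paper closes it by applying the commutator/cancellation estimate $\langle\mathcal{P}[u\cdot\nabla v],v\rangle_{H^{m}}\lesssim(\|u\|_{W^{1,\infty}}\|v\|_{H^{m}}+\|u\|_{H^{m}}\|v\|_{W^{1,\infty}})\|v\|_{H^{m}}$ at level $m=s-1$ to the term $\langle\mathcal{P}[v\cdot\nabla(v-w)],v-w\rangle_{H^{s-1}}$, and the Moser estimate only to the harmless term $(v-w)\cdot\nabla w$. This is also the true source of the threshold $s>d/2+2$ (one needs $s-1>d/2+1$ both for this commutator estimate and for $H^{s-1}\hookrightarrow W^{1,\infty}$ in Assumption~\ref{Assump:sigma}-(e)), which you instead attribute to lower-order embeddings. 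Two smaller imprecisions: the identity $\langle\mathcal{P}[(x\cdot\nabla)x],x\rangle_{H^{s+1}}=\langle(x\cdot\nabla)x,x\rangle_{H^{s+1}}$ is not justified, since $\mathcal{P}$ is orthogonal only for the $L^2$ inner product, not the $H^{s+1}$ one (the paper keeps $\mathcal{P}$ inside and uses the estimate with $\mathcal{P}$ present); and the second clause of Assumption~\ref{Assump:sigma}-(b) is not vacuous --- on $B_{R;E_1}^c\cap\{g\le G\}$ one must still check $\|\sigma(x)\|_{E_1}\le L\|x\|_{E_1}$, which holds because $g(x)\le G$ bounds $\|x\|_{W^{1,\infty}}$ and hence $f(x)$ by $L=c(1+G^2/\tilde C^2)^{\beta/2}$.
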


\begin{remark}\label{rmk:Euler_optimality}
One may note that the initial condition is required to be in the smaller space $\mathcal{H}^{s+1}(D)$ rather than $\mathcal{H}^{s}(D)$, and that we assume $s>d/2+2$ rather than $s>d/2+1$. We expect that these assumptions could be weakened by a finer analysis. In fact, \cite{GlaVic2014} show local well-posedness with general diffusion coefficients, with finer assumptions on the initial condition and on $s$. However, their result does not apply here, since the noise $\sigma(u)=c(1+\|u\|_{W^{1,\infty}(D)}^2)^{\beta/2}u$ does not satisfy the assumptions of \cite[Theorem 4.3]{GlaVic2014}.
\end{remark}

\begin{remark}\label{rmk:TW2}
    \cite[Theorem 4.1]{TW2022} provides a non-explosion criterion for the stochastic Euler equations. Here, we identify a radial noise with a one-dimensional Bronwian motion and with the explicit diffusion coefficient~\eqref{noise:superlinear} avoiding explosion in Euler equations. Moreover, by requiring the no blow-up condition (Assumption \ref{Assump:sigma}-(c)) only where the drift is large, we can include $\beta>1/2$ in the construction~\eqref{noise:superlinear} without requiring the diffusion intensity $c$ in \eqref{noise:superlinear} to be large. This is a situation not discussed in \cite{TW2022} (the noise intensity is allowed not to be large in \cite{ALONSOORAN2022244} and \cite{RohTan2021} but for different SPDEs).
\end{remark}

To prepare for the proof, we first verify a few (classical and standard) conditions.

\begin{lemma}\label{Euler:embed}
 Let $s\in \mathbb{N}$ and $s>1$. Then, $\mathcal{H}^{s+1}(D)$ is compactly embedded in $\mathcal{H}^{s}(D)$, and  $\mathcal{H}^{s}(D)$ is continuously embedded in $\mathcal{H}^{s-1}(D)$. Both embeddings are dense. Moreover, there exist $\theta\in (0,1)$ and $C>0$ such that 
 \begin{equation*}
  \|v\|_{H^{s}(D)}\le C\|v\|_{H^{s+1}(D)}^{1-\theta}\|v\|_{H^{s-1}(D)}^\theta  , \quad \forall v\in \mathcal{H}^{s+1}(D).
 \end{equation*}
\end{lemma}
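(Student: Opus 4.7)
My plan is to establish each conclusion of the lemma by reducing to classical facts about Sobolev spaces on the smooth bounded domain $D$.

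For the embeddings, the inclusion $H^s(D)\hookrightarrow H^{s-1}(D)$ is continuous and $\mathcal{H}^s(D)\subset \mathcal{H}^{s-1}(D)$ since the defining conditions $\operatorname{div} v=0$ and $v\cdot n=0$ are preserved by convergence in the weaker norm. For the compact embedding $\mathcal{H}^{s+1}(D)\subset\subset \mathcal{H}^s(D)$, I would invoke the Rellich--Kondrachov theorem to obtain $H^{s+1}(D)\subset\subset H^s(D)$ and observe that $\mathcal{H}^s(D)$ is closed in $H^s(D)$: the divergence $H^s\to H^{s-1}$ is continuous, and (using $s\ge 2$) the normal trace $H^s(D)\to H^{s-1/2}(\partial D)$ is continuous, so any $H^s$-limit of a sequence in $\mathcal{H}^{s+1}$ lies again in $\mathcal{H}^s$.

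For density, I would rely on the Hodge--Stokes operator $A=-\mathcal{P}\Delta$ with the boundary condition $u\cdot n=0$ (completed by the natural complementary condition making it self-adjoint), which on the smooth domain $D$ is positive and has compact resolvent on $\mathcal{H}^0(D)$. By elliptic regularity its eigenfunctions $\{\phi_k\}_k$ lie in $C^\infty(\overline D;\R^d)$, hence in every $\mathcal{H}^r(D)$. For integer $r\ge 0$ the norm $\|\cdot\|_{H^r(D)}$ is equivalent on $\mathcal{H}^r(D)$ to $\|(I+A)^{r/2}\cdot\|_{L^2(D)}$, so spectral partial sums of any $v\in\mathcal{H}^r(D)$ converge to $v$ in $\mathcal{H}^r(D)$. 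Hence $\Span\{\phi_k\}_k\subset \bigcap_r \mathcal{H}^r(D)$ is dense in every $\mathcal{H}^r(D)$, giving both density claims simultaneously.

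For the interpolation inequality, I would fix a bounded extension operator $E\colon H^r(D)\to H^r(\R^d)$ for all relevant $r$ (available since $\partial D$ is smooth, e.g.\ Stein's construction). Choosing $\theta=\tfrac12$, which satisfies $s=(1-\theta)(s+1)+\theta(s-1)$, Cauchy--Schwarz on the Fourier side of $\R^d$ gives
\begin{equation*}
\|w\|_{H^s(\R^d)}^2 = \int_{\R^d}(1+|\xi|^2)^s |\widehat w(\xi)|^2\,d\xi \le \|w\|_{H^{s+1}(\R^d)}\|w\|_{H^{s-1}(\R^d)}.
\end{equation*}
Applying this to $w=Ev$ and using the boundedness of $E$ on every $H^r$ yields the claimed inequality on $D$. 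I expect the density claim to be the main obstacle: a fully rigorous treatment requires either the Hodge--Stokes spectral theory sketched above, or a mollification-plus-correction argument preserving the slip boundary condition; in the actual write-up I would lean on a standard reference (e.g.\ Temam, \emph{Navier--Stokes Equations}) rather than redo this analysis.
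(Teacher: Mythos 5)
Your treatment of the compact embedding (Rellich--Kondrachov plus closedness of $\mathcal{H}^s(D)$ in $H^s(D)$ via continuity of the divergence and of the normal trace) is exactly the paper's argument, and your interpolation step is a correct, self-contained alternative to the paper's citation of \cite[Theorem 5.2]{adams2003sobolev}: with $\theta=1/2$, Cauchy--Schwarz on the Fourier side combined with a Stein extension does give $\|v\|_{H^s(D)}\le C\|v\|_{H^{s+1}(D)}^{1/2}\|v\|_{H^{s-1}(D)}^{1/2}$.

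The density argument, however, has a genuine gap, and you correctly sensed that this is the delicate point. The problem is the claim that for integer $r\ge 2$ the norm $\|\cdot\|_{H^r(D)}$ is equivalent on $\mathcal{H}^r(D)$ to $\|(I+A)^{r/2}\cdot\|_{L^2(D)}$. The domain $D(A^{r/2})$ is characterized by \emph{additional} boundary compatibility conditions beyond $u\cdot n=0$ (already $D(A)$ carries the complementary Navier-type condition you alluded to), so $D(A^{r/2})$ is a proper closed subspace of $\mathcal{H}^r(D)$ and the spectral partial sums of a general $v\in\mathcal{H}^r(D)$ need \emph{not} converge to $v$ in $H^r$ --- this is the same phenomenon as the failure of $H^1$-convergence of the Fourier sine series of a smooth function that does not vanish at the endpoints. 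So the asserted common dense span of eigenfunctions does not, as written, yield density of $\mathcal{H}^{s+1}(D)$ in $\mathcal{H}^s(D)$. The paper avoids all of this with a much lighter argument: the Leray projector $\mathcal{P}$ is bounded from $H^r(D)$ onto $\mathcal{H}^r(D)$ for every integer $r\ge 1$ (\cite[inequality (2.9)]{GlaVic2014}), so given $v\in\mathcal{H}^s(D)$ one approximates $v$ in $H^s$ by $v_k\in H^{s+1}(D)$ and notes that $\mathcal{P}v_k\in\mathcal{H}^{s+1}(D)$ converges to $\mathcal{P}v=v$ in $\mathcal{H}^s(D)$; density of $H^{s+1}(D)$ in $H^s(D)$ is classical. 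I would replace your spectral construction by this projection argument (the paper reserves the variational/spectral construction, done via Lax--Milgram with the $H^{s\pm1}$ inner products rather than the Stokes operator, for building the Galerkin basis in its Lemma~\ref{Euler:basis}, where only orthogonality --- not $H^r$-convergence of partial sums for all $r$ --- is needed).
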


\begin{proof}[Proof of Lemma~\ref{Euler:embed}]
  First, the embeddings from $\mathcal{H}^{s+1}(D)$ to $\mathcal{H}^{s}(D)$ and $\mathcal{H}^{s}(D)$ to $\mathcal{H}^{s-1}(D)$ are continuous (by the definition of the space norms) and dense, because the Leray projection $\mathcal P: H^r(D)\to\mathcal{H}^r(D)$ is continuous for every positive integer $r$ (see~\cite[inequality~(2.9)]{GlaVic2014}) and $H^{s+1}(D)\hookrightarrow H^s(D)\hookrightarrow H^{s-1}(D)$ are dense embeddings (see~\cite[Theorem 3.17]{adams2003sobolev}). The compact embedding from $\mathcal{H}^{s+1}(D)$ to $\mathcal{H}^{s}(D)$ follows from the Rellich–Kondrachov theorem (see e.g.~\cite[Theorem 6.3]{adams2003sobolev})
  and the fact that divergence-free and slip boundary conditions are preserved in $H^s$-limits. Finally, the interpolation formula comes from \cite[Theorem 5.2]{adams2003sobolev}.
\end{proof}

\begin{lemma}\label{Euler:basis}
 Let $s\in \mathbb{N}$ and $s>1$. Then, $\mathcal{H}^{s-1}(D)$ has an orthonormal basis $\{e_k\}_{k\in \N_+}$ consisting of elements in $\mathcal{H}^{s+1}(D)$. Moreover, $\{e_k\}_{k\in \N_+}$ is also orthogonal in $\mathcal{H}^{s+1}(D)$.
\end{lemma}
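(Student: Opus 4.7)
My plan is to construct the basis through a spectral argument applied to a Riesz-type operator that encodes the compact dense embedding $\mathcal{H}^{s+1}(D)\subset\subset \mathcal{H}^{s-1}(D)$ provided by Lemma~\ref{Euler:embed}. More precisely, I will introduce a compact, self-adjoint, positive, injective operator $K:\mathcal{H}^{s-1}(D)\to\mathcal{H}^{s-1}(D)$ whose eigenvectors form an orthonormal basis of $\mathcal{H}^{s-1}(D)$, lie in $\mathcal{H}^{s+1}(D)$, and are automatically orthogonal with respect to the $\mathcal{H}^{s+1}(D)$-inner product.

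The construction is as follows. For each $f\in\mathcal{H}^{s-1}(D)$, the linear functional $v\mapsto \langle f,v\rangle_{\mathcal{H}^{s-1}(D)}$ is continuous on $\mathcal{H}^{s+1}(D)$ by the continuous embedding $\mathcal{H}^{s+1}(D)\hookrightarrow \mathcal{H}^{s-1}(D)$, so the Riesz representation theorem produces a unique $Kf\in \mathcal{H}^{s+1}(D)$ with
\begin{equation*}
\langle Kf,v\rangle_{\mathcal{H}^{s+1}(D)}=\langle f,v\rangle_{\mathcal{H}^{s-1}(D)},\quad \forall v\in \mathcal{H}^{s+1}(D).
\end{equation*}
Viewed as an operator from $\mathcal{H}^{s-1}(D)$ into itself, $K$ is bounded and self-adjoint (by symmetry of the defining identity after extending $v$ by density), positive (take $v=Kf$ to get $\langle Kf,f\rangle_{\mathcal{H}^{s-1}(D)}=\|Kf\|_{\mathcal{H}^{s+1}(D)}^2$), and compact because it factors through $\mathcal{H}^{s+1}(D)\subset\subset \mathcal{H}^{s-1}(D)$. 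Injectivity follows from the density of $\mathcal{H}^{s+1}(D)$ in $\mathcal{H}^{s-1}(D)$: if $Kf=0$ then $\langle f,v\rangle_{\mathcal{H}^{s-1}(D)}=0$ for all $v$ in a dense subset.

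The spectral theorem for compact self-adjoint operators on a separable Hilbert space then yields an orthonormal basis $\{e_k\}_{k\in\N_+}$ of $\mathcal{H}^{s-1}(D)$ consisting of eigenvectors of $K$ with strictly positive eigenvalues $\lambda_k>0$ (no zero eigenvalue by injectivity, and we may organize the possibly multiple eigenspaces via Gram--Schmidt). Since $Ke_k=\lambda_k e_k$, the identity $e_k=\lambda_k^{-1}K e_k$ places each $e_k$ in the range of $K$, hence in $\mathcal{H}^{s+1}(D)$. Orthogonality in the $\mathcal{H}^{s+1}(D)$-inner product then drops out of the defining property of $K$: for $j,k\in\N_+$,
\begin{equation*}
\langle e_k,e_j\rangle_{\mathcal{H}^{s+1}(D)}=\lambda_k^{-1}\langle Ke_k,e_j\rangle_{\mathcal{H}^{s+1}(D)}=\lambda_k^{-1}\langle e_k,e_j\rangle_{\mathcal{H}^{s-1}(D)}=\lambda_k^{-1}\delta_{kj}.
\end{equation*}

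There is no real obstacle: all the ingredients (compact dense embedding, separability, density) are already recorded in Lemma~\ref{Euler:embed}, and the rest is the standard spectral theory. The only point worth flagging is that the orthogonality of $\{e_k\}$ in the stronger norm is not an extra assumption but a free consequence of defining $K$ through the weaker inner product on the right-hand side, which is precisely why this same construction underlies the remark following Assumption~\ref{Assump:Projection}.
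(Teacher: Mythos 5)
Your construction is correct and is essentially identical to the paper's: the operator $K$ you define via the Riesz representation theorem is exactly the map $\Phi$ the paper obtains from Lax--Milgram, and both arguments then invoke compactness of the embedding, self-adjointness from the symmetric defining identity, and the spectral theorem to produce the doubly orthogonal basis. Your additional details (positivity, injectivity via density, and the explicit computation $\langle e_k,e_j\rangle_{H^{s+1}}=\lambda_k^{-1}\delta_{kj}$) are all sound.
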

\begin{proof}[Proof of Lemma~\ref{Euler:basis}]
We assert the following variational result: For any $w \in \mathcal{H}^{s-1}(D)$, there is a unique element $\Phi(w)$ in $\mathcal{H}^{s+1}(D)$ satisfying 
\begin{align}\label{lax-milgram}
    \langle \Phi(w),v\rangle_{H^{s+1}}=\langle w,v\rangle_{H^{s-1}} \text{ for all } v \in \mathcal{H}^{s+1}(D).
\end{align}

By the Cauchy-Schwartz inequality and the continuous embedding from $\mathcal{H}^{s+1}(D)$ to $\mathcal{H}^{s-1}(D)$, the linear functional $L_w(v):=\langle w,v\rangle_{H^{s-1}}$ is continuous in $\mathcal{H}^{s+1}(D)$. Meanwhile, the scalar product $a(u,v):=\langle u,v\rangle_{H^{s+1}}$ defines a bilinear form on $\mathcal{H}^{s+1}(D)\times \mathcal{H}^{s+1}(D)$ which is clearly coercive and continuous. Then, we may apply the Lax-Milgram theorem and conclude the existence of a unique element in $\mathcal{H}^{s+1}(D)$, say $\Phi(w)$, such that \eqref{lax-milgram} holds. Moreover, $\Phi: \mathcal{H}^{s-1}(D)\to\mathcal{H}^{s+1}(D)$ is injective, linear, and bounded. Given the compact embedding $\mathcal{H}^{s+1}(D)\subset\subset\mathcal{H}^{s-1}(D)$, the mapping $\Phi$ is actually compact on $\mathcal{H}^{s-1}(D)$. Since
\begin{align*}
    \langle \Phi(w),v\rangle_{H^{s-1}}
    =\langle \Phi(w),\Phi(v)\rangle_{H^{s+1}}
    =\langle w,\Phi(v)\rangle_{H^{s-1}}, \qquad \forall v,w \in \mathcal{H}^{s-1}(D),
\end{align*}
then $\Phi$ is also self-adjoint. By the spectral theorem, there exists an orthonormal basis $\{e_k\}_{k\in \N_+}$ of $\mathcal{H}^{s-1}(D)$ that are eigenfunctions of $\Phi$. The basis vectors belong to $\mathcal{H}^{s+1}(D)$ as well, and they are orthogonal with respect to both $H^{s-1}$ and $H^{s+1}$ scalar products (since they are eigenfunctions of $\Phi$). Then, Assumption \ref{Assump:Projection} holds with $C=1$ and $E^n=\Span\{e_1,\,...,\,e_n\}$. 
\end{proof}
\begin{remark}\label{basis:regular}
It follows from \cite[Theorem 4.1]{ghidaglia1984regularite} that $\{e_k\}_{k\in \N_+}$ actually live in $\mathcal{H}^{s+2}(D)$.  
\end{remark}

Next, we recall a few classical estimates (see also~\cite{GlaVic2014}). Let $m$ be an integer satisfying $m>d/2$. Then,
\begin{itemize}
    \item (The Moser Estimate)~$ \|u v\|_{H^{m}} \le C (\|u\|_{L^\infty}\|v\|_{H^{m}} +\|u\|_{H^{m}}\|v\|_{L^{\infty}})$,
    \item (The Sobolev Embedding)~$\|u\|_{L^\infty}\leq C\|u\|_{H^{m}}$ and $\|u\|_{W^{1,\infty}}\leq C\|u\|_{H^{m+1}}$.
\end{itemize}
Hence, for all integers $m$ such that $m>d/2$, we infer 
\begin{align}
    &\|(u\cdot\nabla)v\|_{H^{m}} \le C (\|u\|_{L^\infty}\|v\|_{H^{m+1}} +\|u\|_{H^{m}}\|v\|_{W^{1,\infty}})\le C \|u\|_{H^{m+1}}\|v\|_{H^{m+1}}\label{eq:bd_drift_1}
\end{align}
whenever $u,v\in \mathcal{H}^{m+1}(D)$. If $m>d/2+1$, then by~\cite[Lemma 2.1 (c)]{GlaVic2014},
\begin{align}
  \langle \mathcal{P}[(u\cdot \nabla)v], v\rangle_{H^m} \le C(\|u\|_{W^{1,\infty}}\|v\|_{H^{m}} +\|u\|_{H^{m}}\|v\|_{W^{1,\infty}})\|v\|_{H^m},\label{eq:bd_drift_2_2}
\end{align}
when $u\in \mathcal{H}^{m}(D)$ and $v\in \mathcal{H}^{m+1}(D)$. Note that \eqref{eq:bd_drift_2_2} is a result of the commutator estimates.

\begin{proof}[Proof of Theorem~\ref{thm:ApplicationToEuler}]
Given Lemmas~\ref{Euler:embed} and \ref{Euler:basis}, we are left to verify Assumptions~\ref{Assump:drift}--\ref{Assump:sigma} to apply the main result Theorem \ref{thm:exist_strong} to the stochastic Euler equations~\eqref{eq:stoch_Euler}. 

Assumption~\ref{Assump:drift}-(a) follows from the continuity of the Leray projector, the estimate~\eqref{eq:bd_drift_1} and a simple triangle inequality. 
Exploiting the orthogonality of $\{e_k\}_{k\in \N_+}$ in both $\langle \cdot,\cdot\rangle_{H^{s-1}}$ and $\langle \cdot,\cdot\rangle_{H^{s+1}}$-scalar products and applying~\eqref{eq:bd_drift_2_2} with $m=s+1$ and Remark~\ref{basis:regular}, we obtain for every $u\in E^n$ that
\begin{align*}
    \langle b_n(u), u \rangle_{H^{s+1}}
    = \langle \Pi_n \mathcal{P} (u\cdot\nabla u) , u \rangle_{H^{s+1}}
    = \langle \mathcal{P} (u\cdot\nabla u), u \rangle_{H^{s+1}}
    \le \tilde C \lVert u \rVert_{W^{1,\infty}} \lVert u \rVert_{H^{s+1}}^2.
\end{align*}
Hence, Assumption~\ref{Assump:drift}-(b) is met with
\begin{equation} \label{eq:g(x)}
    g(u)=\tilde C\lVert u \rVert_{W^{1,\infty}(D)}.
\end{equation}
Assumption \ref{Assump:drift}-(c) holds since $b_n$ is quadratic, and Assumption \ref{Assump:drift}-(d) is ensured by the hypothesis. Utilizing \eqref{eq:bd_drift_1}, \eqref{eq:bd_drift_2_2}, and the Sobolev embeddings, we have
\begin{align*}
   & \langle \mathcal{P}[v\cdot \nabla v-w\cdot \nabla w],v-w\rangle_{H^{s-1}}
    = \langle \mathcal{P}[v\cdot \nabla(v-w)],v-w\rangle_{H^{s-1}} 
     + \langle \mathcal{P}[(v-w)\cdot \nabla w],v-w\rangle_{H^{s-1}}
     \\ & \qquad\qquad\qquad  
     \leq C
     (\|v\|_{W^{1,\infty}}\|v-w\|_{H^{s-1}} +\|v\|_{H^{s-1}}\|v-w\|_{W^{1,\infty}})\|v-w\|_{H^{s-1}}
      \\ & \qquad\qquad\qquad\quad  
      +C
     (\|v-w\|_{L^{\infty}}\|w\|_{H^{s}} +\|v-w\|_{H^{s-1}}\|w\|_{W^{1,\infty}})\|v-w\|_{H^{s-1}}
     \\ & \qquad\qquad \qquad  
     \leq
      C (\lVert v\rVert_{H^{s-1}} +\lVert w\rVert_{H^{s}} )\lVert v-w\rVert_{H^{s-1}}^2,
\end{align*}
which fulfils Assumption \ref{Assump:drift}-(e) if $s>d/2+2$.

Besides, by our choice of $\sigma(u)$ in Assumption~\ref{hp:noise_Euler}, $\sigma(u)=f(u)u$ for all $u\in \mathcal{H}^{s+1}(D)$ with
\begin{align*}
f(u) = c(1+\|u\|_{W^{1,\infty}(D)}^2)^{\beta/2}.
\end{align*}
The function $f$ is clearly bounded on $\mathcal{H}^{s+1}(D)$-balls by the Sobolev embedding. Moreover, for the $g(u)$ in~\eqref{eq:g(x)},
\begin{equation*}
    |f(u)|^2= c^2\left(1+\frac{|g(u)|^2}{\tilde C^2}\right)^{\beta}>
    \begin{cases}
     \frac{c^2}{\tilde C}|g(u)| &\qquad \text{if }\beta=\frac{1}{2}, \\
      c^2|g(u)|^{2\beta}/\tilde C^{2\beta} &\qquad \text{if }\beta>\frac{1}{2}.
    \end{cases}
\end{equation*}
Then, Assumption~\ref{Assump:sigma}-(c) holds with $K=0$ and a sufficiently large number $G$ if $\beta>1/2$, or with $K=0$ and an arbitrary positive number $G$ if $\beta=1/2$ and $c^2>2\tilde C$. Given such a prescribed $G$, Assumption~\ref{Assump:sigma}-(b) is satisfied with $L=c(1+G^2/\tilde C^2)^{\beta/2}$. Assumption~\ref{Assump:sigma}-(e) also holds. That is because the function $s\mapsto (1+s^2)^{\beta/2}$ is infinitely differentiable, the space $H^{s-1}(D)$ is continuously embedded into $W^{1,\infty}(D)$, and then
\begin{align}
\begin{split}\label{sigma:cont}
    &\lVert \sigma (v) - \sigma(w) \rVert_{H^{s-1}} \\
    &\quad\le c \left( 1+\lVert v \rVert^2_{W^{1,\infty}} \right)^\frac{\beta}{2} \lVert v-w \rVert_{H^{s-1}} 
    + c \lVert w\rVert_{H^{s-1}} \left| \left(1+\lVert v \rVert^2_{W^{1,\infty}}\right)^\frac{\beta}{2} - \left(1+\lVert w \rVert^2_{W^{1,\infty}}\right)^\frac{\beta}{2}\right|\\
    &\quad \le h \left(\Vert v\Vert_{H^{s}}\vee\Vert w\Vert_{H^{s}} \right)  \Vert v-w\Vert_{H^{s-1}},\quad \forall v,w\in \mathcal{H}^{s}(D) ,
    \end{split}
\end{align}
for an increasing function $h:\R\to \R_+$. Assumption~\ref{Assump:sigma}-(a) follows immediately from \eqref{sigma:cont}, and Assumption~\ref{Assump:sigma}-(d) is a consequence of Assumption~\ref{Assump:sigma}-(e) and the equivalence of $H^{s-1}$, $H^{s}$, and $H^{s+1}$-norms on each finite-dimensional space $E^n=\Span\{e_1,\,...,\,e_n\}$. 

This completes the proof. By Theorems~\ref{thm:exist_global} and \ref{Unique} in Section~\ref{sec4}, the stochastic Euler equations~\eqref{eq:stoch_Euler} have a unique, global-in-time, strong solution in $\mathcal{H}^{s}(D)$ if the initial condition $u_0\in \mathcal{H}^{s+1}(D)$ for $s>d/2+2$.
\end{proof}

\section{The case of Stratonovich noise}

In this Section we consider briefly possible no blow-up effects by Stratonovich noise. We show that a one-dimensional Stratonovich noise with radial diffusion coefficient does not avoid blow-up (actually, it can induce blow-up). In Remarks \ref{rmk:Strat_positive_Bagnara} and \ref{rmk:Strat_positive_Ito}, we recall positive examples from the work \cite{Bagnara2023} in the Stratonovich case with a more complex noise. 

It is natural to ask whether Theorem \ref{thm:exist_strong} still holds when the It\^o integration is replaced by the Stratonovich one. Unfortunately, as the following example suggests, counterexamples arise even in one dimension, where the nonlinear diffusion itself can cause blow-up.

Let $X_t\in \R$ be a solution of
\begin{align}\label{eq:example_1D_Strat}
    \dd X_t= X^2_t \circ \dd W_t.
\end{align}
By converting the SDE in It\^o form, $X_t$ solves $\dd X_t=X_t^3\dd t + X^2_t \dd W_t$, which admits an explicit solution that exhibits blow-up \cite[Exercise 9.23]{Baldi2017}.

The next result shows that the above counterexample is not special: in dimension $1$, a one-dimensional Stratonovich noise cannot prevent blow-up, actually it induces blow-up. This result is a slight modification of \cite[Proposition 5.1]{maurelli2020non} (in fact inspired by \cite{Sch1995}), which can be generalized to higher, possibly infinite dimension, see the discussion after the proof below.

Consider the SDE on $\R$: 
\begin{align}\label{eq:Strat_SDE}
    \dd X = b(X) \dd t + \sigma(X)\circ \dd W,
\end{align}
where $b:\R\to \R$ and $\sigma:\R\to \R$ are given functions and $W$ is a one-dimensional Brownian motion.

\begin{proposition}\label{prop:Strat_blowup}
    Assume that $b:\R\to \R$ is locally Lipschitz, $\sigma:\R\to \R$ is $C^1$ with locally Lipschitz derivative, and $W$ is a one-dimensional Brownian motion (on a filtered probability space with standard assumption). Assume that, for some $c>0$, $\sigma(x)\ge c$ for every $x>0$. Assume also at least one of the following:
    \begin{itemize}
        \item Explosive drift: $b(x)>0$ for $x>0$ and
        \begin{align*}
            \int_0^\infty  \frac{1}{b(x)} \dd x <\infty.
        \end{align*}
        \item Superlinear diffusion:
        \begin{align*}
            \int_0^\infty \frac{1}{\sigma(x)} \dd x <\infty,\qquad \int_0^\infty \frac{b^-(x)}{\sigma(x)^2} \dd x <\infty,
        \end{align*}
        where $b^-(x)=\max\{-b(x),0\}$.
    \end{itemize}
    Then for every $x_0>0$, the solution $X$ to \eqref{eq:Strat_SDE} with initial condition $x_0$ blows up in finite time with positive probability.
\end{proposition}

Note that, in the explosive drift case, the solution to the ODE $\dot{Z}=b(Z)$ blows up in finite time.

\begin{proof}
    We define the transformation
    \begin{align*}
        \Phi(x) = \int_0^x \frac{1}{\sigma(x')} \dd x',\quad 0<x<\infty.
    \end{align*}
    By It\^o formula, $Y=\Phi(X)$ solves the transformed SDE
    \begin{align*}
        \dd Y = A(Y) \dd t +\dd W,\quad \text{with } A(y)=\frac{b(\Phi^{-1}(y))}{\sigma(\Phi^{-1}(y))},
    \end{align*}
    up to the exit time $S$ of $Y$ from $(0,\Phi(\infty))$. Blow-up for $X$ coincides with the event $\{S<\infty,\, Y_S=\Phi(\infty)\}$. By Feller's test, in the version for example of \cite[ Theorem 5.29, Proposition 5.22 and Problem 5.27]{KasShr1991}, the latter event has positive probability provided that
    \begin{align}\label{eq:Feller}
        \int_0^{\Phi(\infty)} \int_0^x \exp\left(-2 \int_y^x A(z) \dd z\right) \dd y \dd x <\infty.
    \end{align}
    
    We consider first the case of explosive drift. In this case, if $\Phi(\infty)<\infty$, then \eqref{eq:Feller} is clearly satisfied since $A$ is positive; if $\Phi(\infty)=\infty$, then by \cite[Corollary 2]{Sch1995}
    \begin{align*}
        2\int_0^{\Phi(\infty)} \int_0^x \exp\left(-2 \int_y^x A(z) \dd z\right) \dd y \dd x \le \int_0^\infty \frac{1}{A(y)} \dd y = \int_0^\infty \frac{1}{b(y)} \dd y < \infty,
    \end{align*}
    so \eqref{eq:Feller} is satisfied. Hence blow-up holds in the case of explosive drift.

    Now we consider the case of superlinear diffusion. In this case $\Phi(\infty)<\infty$ by assumption and, for $0<y<x<\Phi(\infty)$,
    \begin{align*}
        -2 \int_y^x A(z) \dd z = -2 \int_{\Phi^{-1}(y)}^{\Phi^{-1}(x)} \frac{b(z)}{\sigma(z)^2} \dd z \le 2 \int_0^\infty \frac{b^-(z)}{\sigma(z)^2} \dd z<\infty.
    \end{align*}
    Hence \eqref{eq:Feller} is satisfied and blow-up holds also in the case of superlinear diffusion. The proof is complete.
\end{proof}

The result above can be generalized to SDEs in higher, possibly infinite dimension, with a one-dimensional Stratonovich noise and with a diffusion coefficient whose radial component depends only on the norm of the solution; we just give a sketch of the argument. Namely, one can consider an SDE on $\R^d$ or on a Hilbert space $H$ of the form
\begin{align*}
    \dd X = b(X) \dd t +\sigma(X) \circ \dd W
\end{align*}
and study the evolution of the norm $\|X\|$. If, for example, outside a ball we have $b(x)\cdot x/\|x\|\ge b_{rad}(\|x\|)$ and $\sigma(x)\cdot x/\|x\| = \sigma_{rad}(\|x\|)$ for suitable functions $b_{rad}$, $\sigma_{rad}$, we can compare $\|X\|$ with the solution $R$ to
\begin{align*}
    \dd R = b_{rad}(R) \dd t +\sigma_{rad}(R) \circ \dd W
\end{align*}
and apply Proposition \ref{prop:Strat_blowup} to show that, for suitable (but quite general) $b_{rad}$ and $\sigma_{rad}$, $R$ and hence $X$ blow up in finite time with positive probability.

\begin{remark}\label{rmk:Strat_positive_Bagnara}
    We may ask whether other forms of Stratonovich noises can prevent solutions to SPDEs from blowing up. The result is positive in some cases: as demonstrated by the first author in \cite{Bagnara2023} some months after the completion of this manuscript, a no blow-up result can also be obtained in the Stratonovich framework by employing a different diffusion coefficient and requiring the noise to be at least two dimensional.
\end{remark}

\begin{remark}\label{rmk:Strat_positive_Ito}
    A further question is whether there are noises that avoid blow-up both with It\^o and Stratonovich integration. So far, we are not aware of positive examples. Indeed, the strategy used in \cite{Bagnara2023} for Stratonovich case does not easily go through in the It\^o case, so it is unclear whether the same no blow-up result in \cite{Bagnara2023} works also with It\^o noise. We leave this question for future investigation.
\end{remark}

\section{Appendix}

\begin{proof}[Proof of Lemma~\ref{BM}]
Since $\tilde W^n$ has the same probability law as $W$, it is a Brownian motion with respect to its natural filtration. Then, $\tilde W^n_t$ is $\tilde{\mathcal{G}}^n_t$-measurable for all $t\in[0,T]$. Moreover, since $(\tilde X^n, \tilde W^n)$ has the same law as $( X^n, W)$, the independence of $W_t-W_s$ from $\mathcal{\sigma}\{X^n_r,W_r\mid r\le s\}$ implies the independence of $\tilde{W}^n_t-\tilde{W}^n_s$ from $\tilde{\mathcal{G}}^n_s$ whenever $s\le t$. Hence, $\tilde W^n$ is a $(\tilde{\mathcal{G}}^n_t)_{t\in[0,T]}$-adapted Brownian motion.

As the almost sure limit of a sequence of Brownian motions, $\tilde W$ is also a Brownian motion. Certainly, $\tilde W_t$ is $\tilde{\mathcal{G}}_t$-measurable for every $t\in[0,T]$ because $ \mathcal{\sigma}\{\tilde W_s\mid s\le t\}\subset \tilde{\mathcal{G}}_t$. Note that $\tilde W^n_t - \tilde W^n_s$ is independent of all tuples $(\tilde W^n_{s_1},\tilde X^n_{s_1},...,\tilde W^n_{s_k},\tilde X^n_{s_k})$ with $0\le s_1 \le ... \le s_k\le s <t$. Passing to the a.s.\ limit, we conclude that $\tilde{W}_t-\tilde{W}_s$ is independent of $\mathcal{\sigma}\{\tilde X_r,\tilde W_r\mid r\le s\}$ and $(\tilde{\mathcal{G}}_r)_{r\le s}$. Hence, $\tilde W$ is a $(\tilde{\mathcal{G}}_t)_{t\in[0,T]}$-adapted Brownian motion.

For the extension of the results from $\tilde{\mathcal{G}}^n_s, \tilde{\mathcal{G}}_s$ to $\tilde{\mathcal{F}}^n_s, \tilde{\mathcal{F}}_s$, we resort to the argument for proving~\cite[Proposition 2.5]{bass2011stochastic}.
\end{proof}

\begin{proof}[Proof of Lemma~\ref{WS}]
It is equivalent to verify that, for every $t\in [0,T]$, the random variable 
\begin{equation*}
    \tilde Z^n_t \coloneqq \tilde X^n_t-\tilde X^n_0 -\int_0^t b_n(\tilde X^n_s)\,ds-\int_0^t\sigma_n(\tilde X^n_s)\,d\tilde W^n_s
\end{equation*}
is equal to zero $\widetilde\Prob$-a.s.. We denote by $Z^n_t$ the corresponding random variable when $X^n, W$ are present in place of $\tilde X^n, \tilde W^n$. Since $Z^n_t=0$ almost everywhere in $[0,T]\times\Omega$, we turn to prove that $\tilde Z^n_t$ and $Z^n_t$ have the same probability law at all  $t\in [0,T]$. 

As a first step, we show that the discretized processes of $\tilde Z^n_t$ and $Z^n_t$ are distributed the same. We introduce
\begin{align*}
   & \tilde Q^{m,N}_t \coloneqq \tilde X^n_t-\tilde X^n_0 -\sum_{i=0}^{N-1}
   \int_{t^N_i}^{t^N_{i+1}}
   b_n(\tilde X^n_{t^N_i})
   \mathds{1}_{[0, \tilde \tau^n_m)}(s)\,ds
   -
   \sum_{i=0}^{N-1}
   \int_{t^N_i}^{t^N_{i+1}} 
   \sigma_n(\tilde X^n_{t^N_i}) \mathds{1}_{[0, \tilde \tau^n_m)}(s)\,d\tilde W^n_s,\\
    & Q^{m,N}_t \coloneqq  X^n_t- X^n_0 -
    \sum_{i=0}^{N-1}
    \int_{t^N_i}^{t^N_{i+1}} b_n( X^n_{t^N_i})
    \mathds{1}_{[0,  \tau^n_m)}(s)\,ds-
    \sum_{i=0}^{N-1}
    \int_{t^N_i}^{t^N_{i+1}} \sigma_n( X^n_{t^N_i}) 
 \mathds{1}_{[0,\tau^n_m)}(s)\,d W_s,
\end{align*}
where $t^N_i=it/N$ with $i\in\{0, \ldots, N\}$ and the stopping times $\tilde \tau^n_m, \tau^n_m$ are defined as follows,
\begin{equation*}
    \tilde \tau^n_m \coloneqq \inf\left\{ t\in[0,T]\,:\, \lVert \tilde X^n_s\rVert_{E_{0}}\ge m  \right\},
    \qquad
     \tau^n_m \coloneqq \inf\left\{ t\in[0,T]\,:\, \lVert X^n_s\rVert_{E_{0}}\ge m  \right\}
\end{equation*}
for every $m\in\mathbb{N}$. Clearly, $\tilde Q^{m,N}_t $ and $Q^{m,N}_t $ are Borel function respectively of $(\tilde X^n, \tilde W^n)$ and of $( X^n, W)$ (as random variables in $C\left([0,T];E_0\right)\times C\left([0,T];\R\right)$). Since $(\tilde X^n, \tilde W^n)$ and $( X^n, W)$ have the same law as random variables in $C\left([0,T];E_0\right)\times C\left([0,T];\R\right)$, then $\tilde Q^{m,N}_t$ is distributed the same as $ Q^{m,N}_t$ for every positive integer $m$ and $N$.

In the remaining part of the proof, we show that both $ Q^{m,N}_t$ and $\tilde Q^{m,N}_t$ have an almost-sure limit as we pass $N\to \infty$ along some subsequence and then pass $m\to \infty$; The limits are $Z^n_t$ and $\tilde Z^n_t$ respectively, and consequently, $Z^n_t$ and $\tilde Z^n_t$ have the same distribution.

Indeed, we may employ the It\^o isometry and conclude that
\begin{align*}
   & 
   \E\left[\left\Vert
   \sum_{i=0}^{N-1}
   \int_{t^N_i}^{t^N_{i+1}} 
   \sigma_n( X^n_{t^N_i})\mathds{1}_{[0,  \tau^n_m)}(s)\,d W_s
   -
    \int_{0}^{t} \sigma_n(  X^n(s)) \mathds{1}_{[0,  \tau^n_m)}(s)\,d  W_s
 \right\Vert_{E_{-1}}^2\right]
   \\& \quad 
    =
   \E\left[
   \int_{0}^{t} 
    \big\Vert\sum_{i=0}^{N-1}
    \mathds{1}_{[t^N_i,  t^N_{i+1})}(s)\sigma_n( X^n_{t^N_i})
   -
   \sigma_n(  X^n(s))
   \big\Vert_{E_{-1}}^2
   \mathds{1}_{[0,  \tau^n_m)}(s)
   \,ds
   \right]
   .
\end{align*}
Recall Assumption~\ref{Assump:Projection} and Assumption~\ref{Assump:sigma}~(a), which infers the continuity and boundedness of $\sigma_n$ on the $E_0$-ball defined by $\tau^n_m$. Hence, for every $s\in[0,t\wedge \tau^n_m]$ and every $\omega\in\Omega$,
\begin{align*}
    \big\Vert\sum_{i=0}^{N-1}
    \mathds{1}_{[t^N_i,  t^N_{i+1})}(s)\sigma_n( X^n_{t^N_i})
   -
   \sigma_n(  X^n(s))
   \big\Vert_{E_{-1}}
   \longrightarrow 0,
   \end{align*}
as $N\to\infty$, and this norm of difference is uniformly bounded in $\Omega\times [0, T]$ and for all $N$. Therefore, by the dominated convergence theorem,
\begin{equation*}
   \mathcal{I}^{m,N}:=  \sum_{i=0}^{N-1}
   \int_{t^N_i}^{t^N_{i+1}} 
   \sigma_n( X^n_{t^N_i})\mathds{1}_{[0,  \tau^n_m)}(s)\,d W_s
   \longrightarrow
    \mathcal{I}^{m}:= \int_{0}^{t} \sigma_n(  X^n(s)) \mathds{1}_{[0,  \tau^n_m)}(s)\,d  W_s
\end{equation*}
in $L^2\left(\Omega; E_{-1}\right)$ as $N\to \infty$. For the drift term, we resort to Assumption~\ref{Assump:drift}~(a) and the dominated convergence theorem, concluding that
\begin{align*}
   & \left\Vert
    \sum_{i=0}^{N-1}
   \int_{t^N_i}^{t^N_{i+1}} 
   b_n( X^n_{t^N_i})\mathds{1}_{[0,  \tau^n_m)}(s)\,ds
   -
   \int_{0}^{t} b_n(  X^n(s)) \mathds{1}_{[0,  \tau^n_m)}(s)\,ds
   \right\Vert_{E_{-1}}
   \\ & \quad 
   \leq 
   \int_{0}^{t} 
    \big\Vert\sum_{i=0}^{N-1}
    \mathds{1}_{[t^N_i,  t^N_{i+1})}(s) b_n( X^n_{t^N_i})
   -
   b_n(  X^n(s))
   \big\Vert_{E_{-1}}
   \mathds{1}_{[0,  \tau^n_m)}(s)
   \,ds
    \longrightarrow 0, \quad \mathbb{P}\text{-a.s.}
    \end{align*}
 as $N\to \infty$. By extracting a subsequence of $Q^{m,N}_t$, say $Q^{m,N_k}_t$, we have $\mathcal{I}^{m, N_k}\to \mathcal{I}^{m}$ $\mathbb{P}$-a.s., and thus,
\begin{equation*}
\lim_{k\to\infty} Q^{m,N_k}_t=
X^n_t- X^n_0 -\int_{0}^{t} b_n(  X^n(s))\mathds{1}_{[0,  \tau^n_m)}(s) \,ds   -\int_{0}^{t} \sigma_n(  X^n(s)) \mathds{1}_{[0,  \tau^n_m)}(s)\,d  W_s,\quad \mathbb{P}\text{-a.s.}
\end{equation*}
Note that $\lim_{m\to\infty}\tau^n_m=T$ due to Lemma~\ref{Lemma:BoundednessInProb} and the continuous embedding from $E_1$ to $E_0$. Then,
\begin{equation*}
\lim_{m\to\infty}\lim_{k\to\infty} Q^{m,N_k}_t=
Z^n_t,\quad \mathbb{P}\text{-a.s.}
\end{equation*}

Applying a similar argument to $\tilde Q^{m,N}_t$, we also obtain $ \tilde Z^n_t$ as a
$\widetilde{\mathbb{P}}$-almost sure limit of $\tilde Q^{m,N}_t$ for every $t\in[0,T]$. Since the almost sure convergence implies convergence in distribution, we conclude that $\tilde Z^n_t$ and $Z^n_t$ have the same law, which completes the proof.
\end{proof}

\begin{proof}[Proof of Lemma~\ref{lemma:StochIntConvergence}]
For convenience, we denote
\begin{align*}
\mathcal{I}^n(t) :=\int_0^t  G^n\, d W^n,&\qquad 
\mathcal{I}(t):=\int_0^t  G\, d W;\\
\mathcal{I}^n_N(t) :=\sum_{k=1}^{N}\int_0^t  G_k^n\, d W_k^n,&\qquad 
\mathcal{I}_N(t):=\sum_{k=1}^{N}\int_0^t  G_k\, d W_k,
\end{align*}
and split the difference $\mathcal{I}^n- \mathcal{I}$ as follows,
\begin{align}\label{split}
&\mathcal{I}^n- \mathcal{I} =
  (\mathcal{I}^n- \mathcal{I}^n_N  )
    +(\mathcal{I}^n_N- \mathcal{I}_N)
      +(\mathcal{I}_N- \mathcal{I}  ).
\end{align}
Recall that for any positive numbers $a, b$ and progressively measurable process $\Phi$ that lives in $L^{2}([0,T]; L_{2}(\mathfrak{U}, \mathcal H))$ $\Prob$-a.s., we have
\begin{equation}\label{ineq:StochIntProb}
    \Prob \left( \sup_{t\in[0,T]}\left\Vert \int_0^t \Phi_s \,dW_s  \right\Vert_{\mathcal H}>a \right) \le \frac b{a^2} + \Prob \left( \int_0^T  \lVert \Phi_s \rVert^{2}_{L_{2}(\mathfrak{U},\mathcal H)}\,ds>b\right),
\end{equation}
(see e.g.~\cite[Proposition 4.31]{da2014stochastic}\footnote{In \cite{da2014stochastic}, integrands are assumed to be predictable, but this is not a restriction since every progressively measurable process which is $\Prob$-a.s.\ in $L^1([0,T];L_2(\mathfrak{U}, \mathcal H))$ has a predictable version, see e.g.\ the argument in [Kunita, Stochastic flows and stochastic differential equations, page 60].}). Take $\epsilon>0$ and $\delta>0$. Exploiting the inequality~\eqref{ineq:StochIntProb} with $a=\epsilon$ and $b=\delta \epsilon^2$ for $\mathcal{I}^{n}-\mathcal{I}_N^n$, we obtain
\begin{align*}
  &\Prob \left(\sup_{t\in[0,T]}\left\Vert \mathcal{I}^{n}(t) -    \mathcal{I}_N^n(t)  \right\Vert_{\mathcal H} >\epsilon \right)
        \le \delta +    
          \Prob \left(  \sum_{k= N+1}^{\infty}\int_{0}^{T} \left\Vert G^{n}_{k} \right\Vert_{\mathcal H}^{2} dt >  \delta \epsilon^{2} \right)\\
          &\quad\leq \delta +    
          \Prob \left(  \int_{0}^{T} \left\Vert G^{n} - G \right\Vert^{2}_{L_{2}(\mathfrak{U}, \mathcal H)}  dt >  \frac{\delta \epsilon^{2}}{4} \right)+ \Prob \left(   \sum_{k= N+1}^{\infty} \int_{0}^{T}  \left\Vert G_k\right\Vert_{\mathcal H}^{2} dt >  \frac{\delta \epsilon^{2}}{4} \right).
\end{align*}
Since $G\in L^2([0,T]; L_2(\mathfrak{U}, \mathcal H))$ $ \Prob$-a.s., there is a sufficiently large $N$ independent of $n$, such that the last term in the above inequality is smaller than $\delta/2$. Hence,
\begin{align*}
    \Prob \left(\sup_{t\in[0,T]}\left\Vert \mathcal{I}^{n}(t) -    \mathcal{I}_N^n(t)  \right\Vert_{\mathcal H} >\epsilon \right) \le \frac{3\delta}{2} +\Prob \left(  \int_{0}^{T} \left\Vert G^{n} - G \right\Vert^{2}_{L_{2}(\mathfrak{U}, \mathcal H)}  dt >  \frac{\delta \epsilon^{2}}{4} \right).
\end{align*}
By the assumption $G^n\to G$ in $L^2([0,T]; L_2(\mathfrak{U}, \mathcal H))$ as $n\to\infty$ in probability, we can choose $n_1$, depending on $\delta$ and $\epsilon$, such that, for every $n\ge n_1$, the last term in the above inequality is smaller than $\delta/2$. Hence, for every $n\ge n_1$, we arrive at
\begin{align}\label{eq:est1}
    \Prob \left(\sup_{t\in[0,T]}\left\Vert \mathcal{I}^{n}(t) -    \mathcal{I}_N^n(t)  \right\Vert_{\mathcal H} >\epsilon \right) \le 2\delta.
\end{align}
Proceeding similarly for $\mathcal{I}-\mathcal{I}_N$ and choosing a large enough $N$, we obtain
\begin{align}\label{est-2}
    \Prob \left(\sup_{t\in[0,T]}\left\Vert \mathcal{I}(t)-\mathcal{I}_N(t)  \right\Vert_{\mathcal H} >\epsilon \right)
          &\le \delta +    
          \Prob \left(  \sum_{k= N+1}^{\infty} \int_{0}^{T} \left\Vert G_{k} \right\Vert_{\mathcal H}^{2} dt >  \delta \epsilon^{2} \right) \le 2\delta.
\end{align}

With such large $N$, now we estimate $\mathcal{I}_N^n- \mathcal{I}_N$ in the splitting~\eqref{split}. Note that $\mathcal{I}_N^n- \mathcal{I}_N$ is a finite sum of stochastic integrals with respect to different Brownian motions $W^n, W$, which we expect to handle
by exploiting an integration by parts formula and the convergence of $W^n$ to $W$ in $C([0,T])$. For this reason, we introduce a smoothing operator $\mathcal{R}_{\rho}$ for every $\rho>0$, 
\begin{equation}\label{eq:smoothingOperator}
  \mathcal{R}_{\rho}\Phi\,(t) 
  := \frac{1}{\rho} \int_0^t 
  \exp\left( - \frac{t -s}{\rho} \right) \Phi(s)\, ds,\quad \Phi\in L^{1}([0,T], \mathcal H).
\end{equation}
The operators $\mathcal{R}_{\rho}$ are approximations of identity (though with a nonsmooth function), hence the following properties hold (see e.g. \cite[estimates (4.19)]{bensoussan1995}, and \cite[Theorem 2.29]{adams2003sobolev} for a similar proof),
\begin{align}  \begin{split}\label{eq:smoothing_properties}
        &\lVert \mathcal{R}_{\rho}\Phi \rVert_{L^2([0,T];\mathcal H)} \le \lVert  \Phi\rVert_{L^2([0,T];\mathcal H)},\\
        &\lVert \mathcal{R}_{\rho}\Phi - \Phi \rVert_{L^2([0,T];\mathcal H)} \to 0  \text{ as $\rho \to 0$}.
    \end{split}
\end{align}
We split $\mathcal{I}_N^n- \mathcal{I}_N$ as follows:
\begin{align*}
    &\mathcal{I}_N^n(t)- \mathcal{I}_N(t)
    = \sum_{k=1}^N \int_0^t ( G^n_k - \mathcal{R}_\rho G^n_{k} )dW^n_k
    +\sum_{k=1}^N \left(\int_0^t \mathcal{R}_\rho G^n_{k} dW^n_k
      -\int_0^t \mathcal{R}_\rho G_{k} dW_k\right) \\
      &\quad\qquad\qquad
      + \sum_{k=1}^N\int_0^t ( \mathcal{R}_\rho G_{k} - G_k) dW_k=: A_1(t)+A_2(t)+A_3(t).
\end{align*}
For the term $A_1$, we employ the inequality~\eqref{ineq:StochIntProb} with $a=\epsilon$, $b=\delta\epsilon^2$ and the operator properties~\eqref{eq:smoothing_properties}, obtaining
\begin{align}
 &\Prob\biggl( \sup_{t\in[0,T]}\Vert A_1(t) \Vert_{\mathcal H} >\epsilon \biggr)
 \leq \delta + \Prob \left(  \int_{0}^{T} \sum_{k=1}^N\left\Vert G^{n}_{k} - \mathcal{R}_{\rho} G^{n}_{k} \right\Vert^{2}_{\mathcal H} ds > \delta \epsilon^{2} \right)
   \nonumber \\
    &\quad \leq \delta  +
    \Prob\left(  \int_0^T \sum_{k=1}^N \left\Vert G_k^n - G_k\right\Vert^{2}_{\mathcal H} dt > \frac{\delta \epsilon^{2}}{9} \right) + 
    \Prob\left(  \int_0^T \sum_{k=1}^N \left\Vert G_k - \mathcal{R}_\rho G_{k} \right\Vert^{2}_{\mathcal H} dt > \frac{\delta \epsilon^{2}}{9} \right) 
   \nonumber \\
    &\qquad + \Prob\left(  \int_0^T  \sum_{k=1}^N \left\Vert \mathcal{R}_\rho G_{k} - \mathcal{R}_\rho G^n_{k} \right\Vert^{2}_{\mathcal H} dt > \frac{\delta \epsilon^{2}}{9} \right)
    \nonumber\\
    &\quad\leq \delta +
    2\Prob\left(  \int_0^T \sum_{k=1}^N\left\Vert G_k^n - G_k\right\Vert^{2}_{\mathcal H} dt > \frac{\delta \epsilon^{2}}{9} \right)+
    \Prob\left(  \int_0^T \sum_{k=1}^N\left\Vert G_k - \mathcal{R}_\rho G_{k} \right\Vert^{2}_{\mathcal H} dt > \frac{\delta \epsilon^{2}}{9} \right).\nonumber
\end{align}
Since $\Vert\mathcal{R}_{\rho} G_{k}- G_k\Vert_{L^2([0,T];\mathcal H)}\to 0$ (as $\rho\to 0$) $\Prob$-a.s.\ for every $k\in\mathbb{N}$, the convergence is also in probability. Hence, there exists a small $\rho>0$ which depends on $N$ (but not on $n$), such that, for every $n$, the last term in the above inequality is smaller than $\delta$. We arrive at
\begin{align*}
    \Prob\biggl( \sup_{t\in[0,T]}\Vert A_1(t) \Vert_{\mathcal H} >\epsilon  \biggr)
    \leq 2\delta +
    2\Prob\left(  \int_0^T \sum_{k=1}^N\left\Vert G_k^n - G_k\right\Vert^{2}_{\mathcal H} dt > \frac{\delta \epsilon^{2}}{9} \right).
\end{align*}
By the assumption~\eqref{eq:ConvAsmpStochIntegrand}, we can choose $n_2$, depending on $\delta$ and $\epsilon$, such that the last term in the above inequality is smaller than $\delta$ whenever $n\ge n_2$. Hence, for every $n\ge n_2$, we have
\begin{align*}
    \Prob\biggl( \sup_{t\in[0,T]}\Vert A_1(t) \Vert_{\mathcal H} >\epsilon  \biggr)
    \leq 4\delta.
\end{align*}
Proceeding similarly for $A_3$, we conclude that for a sufficiently small $\rho>0$ which depends on $N$,
\begin{align*}
    \Prob\biggl( \sup_{t\in[0,T]}\Vert A_3(t) \Vert_{\mathcal H} >\epsilon  \biggr)
    &\leq \delta + \Prob \left(  \int_{0}^{T} \sum_{k=1}^N\left\Vert G_{k} - \mathcal{R}_{\rho} G_{k} \right\Vert^{2}_{\mathcal H} ds > \delta \epsilon^{2} \right)
    \le 2\delta.
\end{align*}
We are left to deal with the term $A_2$. For every $k$, since $\mathcal{R}_\rho G_k$ and $\mathcal{R}_\rho G^n_k$ have $W^{1,2}([0,T];\mathcal{H})$-trajectories $\Prob$-a.s., the integration by parts formula below holds without a stochastic correction: applying the It\^o formula (e.g.~\cite[Theorem 4.32]{da2014stochastic}) to $\mathcal{R}_\rho G_k\,d W_k$, we obtain
\begin{align*}
        &\int_0^t \mathcal{R}_\rho G_{k} \,dW_k=\mathcal{R}_\rho G_{k}(t)W_k(t)- \int_0^t \frac{d}{dr}\left( \mathcal{R}_\rho G_{k}(r)\right)\Bigr\rvert_{r = s} W_k(s)\,ds
        \\
        &\quad =\mathcal{R}_\rho G_{k}(t)W_k(t)
        + \frac{1}{\rho} \int_0^t  \mathcal{R}_\rho G_{k}(s) W_k(s)\,ds 
        - \frac 1 \rho \int_0^t  G_{k}(s) W_k(s)\,ds,
    \end{align*}
and similarly for $ \int_0^t \mathcal{R}_\rho G^n_{k} dW^n_k$. Thus, we have
\begin{align*}
&A_2(t) = \sum_{k=1}^N\left(\mathcal{R}_\rho G^n_{k}(t)W^n_k(t)-\mathcal{R}_\rho G_{k}(t)W_k(t)\right)-\frac 1 \rho \sum_{k=1}^N\int_0^t  (G^n_{k}(s) W^n_k(s)-G_{k}(s) W_k(s))\,ds\\
&\qquad +\frac{1}{\rho} \sum_{k=1}^N\int_0^t  (\mathcal{R}_\rho G^n_{k}(s) W^n_k(s)-\mathcal{R}_\rho G_{k}(s) W_k(s) )\,ds
:=A_{21}(t)+A_{22}(t)+A_{23}(t).
\end{align*}
Let us analyze these three terms. By the Cauchy-Schwartz inequality,
\begin{equation*}
    \begin{split}
        \left\Vert \mathcal{R}_\rho \Phi (t) \right\Vert_{\mathcal H} 
        &=  \left\Vert \int_0^t \frac{1}{\rho} e^{ - \frac{t -s}{\rho} }\Phi(s)\, ds \right\Vert_{\mathcal H} 
        \le \rho^{-1/2} \left\Vert \Phi \right\Vert_{L^2([0,T];\mathcal H)}.
    \end{split}
\end{equation*}
Then, we have for $A_{21}$
\begin{align*}
     &\Prob\biggl( \sup_{t\in[0,T]}\Vert A_{21}(t) \Vert_{\mathcal H} > \frac{\epsilon}{3}  \biggr)
       \leq 
        \sum_{k=1}^N\Prob\biggl( \sup_{t\in[0,T]} | W_k^n(t) | \left\Vert \mathcal{R}_\rho (G_{k}^n- G_{k})(t) \right\Vert_{\mathcal H} >\frac \epsilon {6N}  \biggr)
        \\&\quad \qquad\qquad
        +\sum_{k=1}^N\Prob\biggl( \sup_{t\in[0,T]} | W_k^n(t)-W_k(t) | \left\Vert \mathcal{R}_\rho G_{k}(t) \right\Vert_{\mathcal H}  >\frac \epsilon {6N}  \biggr)
        \\
        &\qquad\qquad\leq \sum_{k=1}^N\Prob\biggl( \sup_{t\in[0,T]} | W_k^n(t) | \left\Vert G_{k}^n- G_{k} \right\Vert_{L^2([0,T];\mathcal H)} >\frac {\epsilon \rho^{1/2} } {6N}  \biggr)
        \\
        &\quad\qquad\qquad+\sum_{k=1}^N\Prob\biggl( \sup_{t\in[0,T]} | W_k^n(t)-W_k(t) | \left\Vert G_{k} \right\Vert_{L^2([0,T];\mathcal H)}  >\frac {\epsilon \rho^{1/2} } {6N}  \biggr).
\end{align*}
Note that \eqref{eq:ConvAsmpNoise} implies, for every $k$, that $\sup_{t\in [0,T]}|W_k^n(t)-W_k(t)|\to 0$ in probability and that $\{\sup_{t\in [0,T]}|W_k^n(t)|\}_{n\in\mathbb{N}}$ is uniformly bounded in probability; similarly, \eqref{eq:ConvAsmpStochIntegrand} implies, for every $k$, that $\Vert G_{k}^n- G_{k} \Vert_{L^2([0,T];\mathcal H)}\to 0$ in probability.
Hence, we can choose $n_3$, depending on $\epsilon$ and $ \delta$ (as well as $N$ and $\rho$ which have been already fixed), such that each addend on the right-hand side is smaller than $\delta/N$. Hence, for every $n\ge n_3$, we get
\begin{align*}
    \Prob\biggl( \sup_{t\in[0,T]}\Vert A_{21}(t) \Vert_{\mathcal H} > \frac{\epsilon}{3}  \biggr)
       &\leq 2\delta.
\end{align*}
For $A_{23}$, we use \eqref{eq:smoothing_properties} and get
\begin{align*}
        &\Prob\biggl( \sup_{t\in[0,T]}\Vert A_{23}(t)\Vert_{\mathcal H} >\frac{\epsilon}{3}  \biggr)
        \leq \sum_{k=1}^N\Prob\biggl( \sup_{t\in[0,T]} | W_k^n(t) | \int_0^T \left\Vert \mathcal{R}_\rho (G_{k}^n- G_{k})(s) \right\Vert_{\mathcal H} \,ds >\frac {\epsilon\rho} {6N}  \biggr)
        \\
        &\quad\qquad\qquad+\sum_{k=1}^N\Prob\biggl( \sup_{t\in[0,T]} | W_k^n(t)-W_k(t) |  \int_0^T \left\Vert \mathcal{R}_\rho G_{k}(s) \right\Vert_{\mathcal H}  \,ds   >\frac {\epsilon\rho} {6N}  \biggr)
        \\
        &\qquad\qquad\leq \sum_{k=1}^N\Prob\biggl( \sup_{t\in[0,T]} | W_k^n(t) | T^{1/2}\left\Vert G_{k}^n- G_{k} \right\Vert_{L^2([0,T];\mathcal H)} >\frac {\epsilon \rho } {6N}  \biggr)
        \\
        &\quad\qquad\qquad+\sum_{k=1}^N\Prob\biggl( \sup_{t\in[0,T]} | W_k^n(t)-W_k(t) | T^{1/2} \left\Vert G_{k} \right\Vert_{L^2([0,T];\mathcal H)}  >\frac {\epsilon \rho } {6N}  \biggr).
\end{align*}
By the same argument as above, we can find $n_4$, depending on $\epsilon$, $\delta$, $N$, and $\rho$, such that each addend on the right-hand side of the above inequality is smaller than $\delta/N$. Hence, for every integer $n\ge n_4$, we get
\begin{align*}
    \Prob\biggl( \sup_{t\in[0,T]}\Vert A_{23}(t)\Vert_{\mathcal H} >\frac{\epsilon}{3} \biggr) \le 2\delta.
\end{align*}
We can deal with $A_{22}(t)$ in the same way. Then, there is $n_5\in\mathbb{N}$ such that 
\begin{align*}
    \Prob\biggl( \sup_{t\in[0,T]}\Vert A_{22}(t)\Vert_{\mathcal H} >\frac{\epsilon}{3}  \biggr) \le 2\delta
\end{align*}
whenever $n\ge n_5$. Putting all estimates together, we conclude that, for every integer $n$ that is greater than $n_0:=\max\{n_1,n_2,n_3,n_4,n_5\}$,
\begin{align*}
    \Prob \left( \sup_{t\in[0,T]}\left\Vert \mathcal{I}^n(t)- \mathcal{I}(t)  \right\Vert_{\mathcal H} > 5\epsilon\right)
    \le 16\delta.
\end{align*}
By the arbitrariness of $ \epsilon$ and $ \delta $, the lemma is proved.
\end{proof}

\printbibliography

\end{document}